\def\th@plain{\thm@notefont{} \itshape}
\def\th@definition{\thm@notefont{}\normalfont}
\DeclareFontFamily{U}{futm}{}
\DeclareFontShape{U}{futm}{m}{n}{
  <-> s * [1] fourier-bb
  }{}
\DeclareSymbolFont{Ufutm}{U}{futm}{m}{n}
\DeclareSymbolFontAlphabet{\mathbb}{Ufutm}
\mathchardef\mhyphen="2D 
\DeclareRobustCommand\widecheck[1]{{\mathpalette\@widecheck{#1}}}
\def\@widecheck#1#2{%
    \setbox\z@\hbox{\m@th$#1#2$}%
    \setbox\tw@\hbox{\m@th$#1%
       \widehat{%
          \vrule\@width\z@\@height\ht\z@
          \vrule\@height\z@\@width\wd\z@}$}%
    \dp\tw@-\ht\z@
    \@tempdima\ht\z@ \advance\@tempdima2\ht\tw@ \divide\@tempdima\thr@@
    \setbox\tw@\hbox{%
       \raise\@tempdima\hbox{\scalebox{1}[-1]{\lower\@tempdima\box
\tw@}}}%
    {\ooalign{\box\tw@ \cr \box\z@}}}
\renewcommand{\thefootnote}{\fnsymbol{footnote}}
\newtheorem{defn0}{Definition}[section]
\newtheorem{prop0}[defn0]{Proposition}
\newtheorem{thm0}[defn0]{Theorem}
\newtheorem{lemma0}[defn0]{Lemma}
\newtheorem{corollary0}[defn0]{Corollary}
\newtheorem{example0}[defn0]{Example}
\newtheorem{remark0}[defn0]{Remark}
\newtheorem{conjecture0}[defn0]{Conjecture}
\newenvironment{proposition}{\smallskip\begin{prop0}}{\end{prop0}}
\newenvironment{theorem}{\smallskip\begin{thm0}}{\end{thm0}}
\newenvironment{lemma}{\smallskip\begin{lemma0}}{\end{lemma0}}
\newenvironment{corollary}{\smallskip\begin{corollary0}}{\end{corollary0}}
\newenvironment{example}{\smallskip\begin{example0}}{\end{example0}}
\newenvironment{remark}{\smallskip\begin{remark0}}{\end{remark0}}
\newcommand{\CC}{\operatorname{\mathfrak{C}}}
\newcommand{\DD}{\operatorname{\mathfrak{D}}}
\newcommand{\EE}{\operatorname{\mathfrak{E}}}
\newcommand{\FF}{\operatorname{\mathfrak{F}}}
\newcommand{\GG}{\operatorname{\mathfrak{G}}}
\newcommand{\KK}{\operatorname{\mathfrak{K}}}
\newcommand{\MM}{\operatorname{\mathfrak{M}}}
\newcommand{\PP}{\operatorname{\mathfrak{P}}}
\newcommand{\ob}{\operatorname{Obj}}
\newcommand{\comp}{\operatorname{\mathpzc{Comp}}}
\newcommand{\cat}{\operatorname{\mathpzc{Cat}}}
\newcommand{\topo}{\operatorname{\mathpzc{Top}}}
\newcommand{\categ}{\operatorname{2\mathpzc{Cat}}}
\newcommand{\surf}{\operatorname{\mathpzc{Surf}}}
\newcommand{\disc}{\operatorname{\mathpzc{Disc}}}
\newcommand{\grap}{\operatorname{\mathpzc{Graph}}}
\newcommand{\forest}{\operatorname{\mathpzc{Forest}}}
\newcommand{\aaa}{\operatorname{\mathpzc{A}}}
\newcommand{\bbb}{\operatorname{\mathpzc{B}}}
\newcommand{\ccc}{\operatorname{\widetilde{\mathpzc{C}}}}
\newcommand{\gcc}{\operatorname{\mathpzc{C}}}
\newcommand{\mmm}{\operatorname{\mathpzc{M}}}
\newcommand{\nnn}{\operatorname{\mathpzc{N}}}
\newcommand{\goo}{\operatorname{\mathpzc{O}}}
\newcommand{\homo}{\operatorname{Hom}}
\newcommand{\aut}{\operatorname{Aut}}
\newcommand{\id}{\operatorname{Id}}
\newcommand{\K}{\operatorname{\mathbb{K}}}
\newcommand{\rad}{\operatorname{rad}}
\newcommand{\tr}{\operatorname{Tr}}
\renewcommand{\thefootnote}{\fnsymbol{footnote}}
\newcommand{\mo}{\operatorname{Mod}}
\DeclareMathOperator*{\hoc}{hocolim}
\begin{document}

\title{On the Structure of Open Equivariant Topological Conformal Field Theories}
\author{Rams\`es Fern\`andez-Val\`encia}
\date{}

\maketitle


\begin{abstract}
\noindent
We give a classification of open equivariant topological conformal field theories in terms of Calabi-Yau $A_\infty$-categories endowed with a group action. 
\end{abstract}

\tableofcontents

\let\thefootnote\relax\footnotetext{\date{\today}}
\let\thefootnote\relax\footnotetext{Email: ramses.fernandez.valencia@gmail.com}

\section{Introduction}

We can think of 2-dimensional topological conformal field theories (TCFT for short) as an extension of topological quantum field theories in dimension 2 (henceforth TQFT) since we work not with surfaces but with Riemann surfaces, that is: we consider surfaces equipped with a complex structure. The first studies around TCFTs were made by Moore and Segal, in \cite{Moore00, MoSe06}, who first gave a precise definition for TCFT and suggested the importance of its study.
\newpage
For a finite set, whose elements are called $D$-branes, let us consider $\mathpzc{OC}_{\Lambda}$ the category whose class of objects are 1-manifolds (disjoint unions of circles and intervals) with boundary labelled by $D$-branes and with class of morphisms given by given by singular chains on moduli spaces of Riemann surfaces. Given a field $\K$ of characteristic zero, it makes sense to consider differential graded symmetric monoidal functors of the form $\FF:\mathpzc{OC}_{\Lambda}\to\comp_{\K}$, where $\comp_{\K}$ is the category of chain complexes over $\K$. Such a functor $\FF$, satisfying certain conditions, is called a \index{TCFT} 2-dimensional \textit{topological conformal field theory}. Depending on the boundary components of the Riemann surfaces we work with, we can talk about open, closed or open-closed TCFTs.
\\\\
Relying on a ribbon graph decomposition of the moduli space of marked Riemann surfaces, open TCFTs were classified by Costello \cite{Costello07} in terms of $A_\infty$-categories satisfying a Calabi-Yau condition. Costello also gave a universal extension from open TCFTs to open-closed TCFTs and proves that the homology associated to the closed part of an open-closed TCFT is described in terms of the Hochschild homology of the Calabi-Yau $A_\infty$-category associated to its open part. 
\\\\
Inspired by Costello and Braun \cite{thbraun12}, the author developed a full generalization of Costello's results in the unoriented setting, that is replacing Riemann surfaces with Klein surfaces \cite{Ramses15, thramses15}. 
\\\\
If we consider now a discrete group $G$ and moduli spaces of geometric structures over Riemann surfaces, we can talk about \textit{equivariant topological conformal field theories}. As before, depending on the boundary components of our Riemann surfaces, we can talk about open, closed or open-closed equivariant TCFTs. By extending the techniques developed in \cite{Ramses15} to the equivariant setting, the research developed here studies the classification of open equivariant TCFTs. The main results are:
\begin{theorem}\label{thetheorem}
For a discrete group $G$, there is a homotopy equivalence between open equivariant TCFTs and Calabi-Yau $A_\infty$-categories endowed with a group action.
\end{theorem}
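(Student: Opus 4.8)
The plan is to deduce the equivariant classification from the non-equivariant one recalled above---Costello's theorem together with the differential graded model of the open moduli space constructed in \cite{Ramses15}---by exploiting that for a \emph{discrete} group $G$ the $G$-structures on a surface are rigid and are therefore recorded by finite combinatorial holonomy data. I would first make the domain precise: let $\mathpzc{OC}_\Lambda^G$ be the category with the same $D$-brane-labelled objects as $\mathpzc{OC}_\Lambda$, whose morphism complexes are singular chains on moduli spaces of open Riemann surfaces equipped with a $G$-structure, that is a homotopy class of map to $BG$, equivalently a principal $G$-bundle. An open equivariant TCFT is then a differential graded symmetric monoidal functor $\FF\colon\mathpzc{OC}_\Lambda^G\to\comp_\K$ subject to the same conditions as in the non-equivariant case.

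Because $G$ is discrete, a $G$-structure on a surface $\Sigma$ is the same as a $G$-local system, so the forgetful map from the $G$-decorated moduli space to the ordinary one is a covering whose fibre is the finite set $\mathrm{Hom}(\pi_1(\Sigma),G)/G$, on which the group of global gauge transformations acts. The core geometric step is to promote this to a statement about the whole category: the natural $G$-symmetry of the moduli of $G$-structures exhibits $\mathpzc{OC}_\Lambda^G$ as encoding $\mathpzc{OC}_\Lambda$ together with a $G$-action, so that a functor out of it is precisely an open TCFT equipped with a coherent $G$-action---equivalently a functor $BG\to\{\text{open TCFTs}\}$.

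To establish this I would upgrade the ribbon-graph (fatgraph) model of \cite{Costello07, Ramses15} equivariantly. Since a ribbon graph is a $K(\pi_1,1)$ for its surface, a $G$-local system on it is determined by a holonomy label in $G$ on each oriented edge, modulo the vertex gauge action, and concatenation of holonomies along a glued edge realises the sewing operation combinatorially. This yields a differential graded category of $G$-decorated open ribbon graphs, quasi-isomorphic via the edge-contraction differential to $\mathpzc{OC}_\Lambda^G$ and carrying the $G$-action by relabelling. On the algebraic side, a Calabi--Yau $A_\infty$-category endowed with a $G$-action is by definition a functor $BG\to\{\text{Calabi--Yau }A_\infty\text{-categories}\}$: each $g$ acts by a cyclic $A_\infty$-endofunctor $\Phi_g$ with coherences $\Phi_g\circ\Phi_h\simeq\Phi_{gh}$ coming from the group law. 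Applying the non-equivariant homotopy equivalence of \cite{Costello07, Ramses15}, which is a zig-zag of quasi-equivalences and therefore descends to $BG$-diagrams, then produces the asserted homotopy equivalence.

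The hard part will be the equivariant ribbon-graph decomposition and, above all, its compatibility with sewing. I must verify that concatenation of edge-holonomies is coherently associative and compatible with the edge-contraction differential, so that the induced $G$-action on the associated $A_\infty$-category is coherent up to all higher homotopies rather than only on homology; and that the Calabi--Yau pairing, which in the non-equivariant case comes from the disc with two marked boundary points, is $G$-invariant, so that every $\Phi_g$ genuinely preserves the cyclic structure. Proving this coherence---equivalently, that passing to $G$-structures introduces no moduli beyond the discrete holonomy labels---is exactly where the discreteness of $G$ is used and where the bulk of the technical effort lies.
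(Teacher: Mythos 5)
You have located the work in the right place (reduce to a combinatorial disc/graph model, extract the group action, feed it into the non-equivariant classification), but there is a genuine gap at the very start that propagates through the whole plan: your moduli spaces forget the \emph{boundary trivializations} of the $G$-bundle. You take a $G$-structure to be ``a homotopy class of map to $BG$, equivalently a principal $G$-bundle'', and accordingly describe the fibre of the forgetful map as $\homo(\pi_1(\Sigma),G)/G$. The paper's category $\mmm^G_\Lambda$ instead consists of surfaces with a principal $G$-bundle \emph{together with a trivialization over each boundary component}, and this is not a technicality. First, the trivializations are exactly what make gluing of bundles along boundary intervals well defined, so without them your $\mathpzc{OC}^G_\Lambda$ does not even compose. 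Second, and worse, on the generating objects --- discs --- every $G$-bundle is trivial and $\homo(\pi_1(D),G)/G$ is a point, so in your model the disc part of the theory carries no $G$-data at all and the classification could never produce the automorphisms $\varphi_g$. In the paper the entire group action arises from the twisted discs $\overline{(D^{\mathfrak{g}}(\lambda_0,\lambda_1),\varphi)}$, discs with two boundary marked points whose two trivializations differ by $g$ (\propref{generators}, \thmref{theorem_Mess}, \lemref{invinfinity}): with framed boundary, an $n$-marked disc carries $G^{n}$ worth of trivialization data modulo a single diagonal $h\in G$, i.e.\ $G^{n-1}$, not a point. The same slip recurs in your ribbon-graph model: quotienting edge-holonomies ``modulo the vertex gauge action'' at \emph{all} vertices kills every local system on a tree, and trees are precisely the graphs governing the generating discs; you must keep the boundary legs framed and gauge only at internal vertices. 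As written, your model computes functors out of a category in which all disc generators are untwisted, and the theorem's group action cannot be recovered from it.

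Two further divergences from the paper are worth flagging even after you repair this. The paper does not build an equivariant ribbon-graph complex by hand: it invokes Giansiracusa's theorem (\thmref{jeffrey}) that the derived modular envelope of the cyclic operad $\DD_\varsigma$ of $\varsigma$-structured discs is weakly equivalent to $\MM_\varsigma$, specializes $\varsigma$ to principal $G$-bundles to get $\mathpzc{D}^G_\Lambda\simeq\goo^G_\Lambda$ (\propref{quasiisos}), and then classifies split functors out of $\mathpzc{D}^G_\Lambda$ by an explicit generators-and-relations presentation; this yields a \emph{strict} homomorphism $G\to\aut(\bbb)$, whereas your formulation via $\Phi_g\circ\Phi_h\simeq\Phi_{gh}$ is only homotopy-coherent and would need a rectification step to match the statement proved. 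Your hands-on holonomy-labelled graph model is a legitimate alternative to the derived-envelope machinery, but your intermediate assertion that a functor out of $\mathpzc{OC}^G_\Lambda$ ``is precisely'' a $BG$-diagram of open TCFTs is essentially the theorem itself, not a reformulation: it amounts to identifying the $G$-decorated surface category with a semidirect product of the plain one by $G$, and that identification is exactly what the disc decomposition (or your repaired graph model) must supply; the observation that zig-zags of quasi-equivalences descend to $BG$-diagrams applies only \emph{after} that identification is established.
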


\begin{proposition}\label{quasi-equivalence}
The following categories are quasi-equivalent:
\begin{enumerate}
\item The category of unital extended equivariant $A_\infty$-categories;
\item the category of unital equivariant $A_\infty$-categories and
\item the category of unital equivariant DG categories.
\end{enumerate}
\end{proposition}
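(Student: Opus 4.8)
The plan is to establish the two quasi-equivalences $(3)\simeq(2)$ and $(2)\simeq(1)$ separately, in each case exhibiting an explicit pair of functors whose composites are joined to the identities by natural quasi-isomorphisms, and then verifying that every functor involved intertwines the $G$-actions. The non-equivariant prototypes of both comparisons are classical: the rectification of $A_\infty$-categories into DG categories goes back to Keller and Lef\`evre-Hasegawa, and the triviality of the extended structure mirrors the analogous step in Costello \cite{Costello07} and in the author's unoriented treatment \cite{Ramses15, thramses15}. The genuinely new content therefore lies in upgrading these comparisons to the equivariant level, where each construction must be shown to respect the group action.

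For $(2)\simeq(3)$ I would use strictification. Since a unital DG category is exactly a unital $A_\infty$-category whose products $m_{n}$ vanish for $n\geq 3$, there is a fully faithful inclusion $(3)\hookrightarrow(2)$. A homotopy inverse is furnished by the Yoneda embedding of a unital equivariant $A_\infty$-category $\mathcal{A}$ into its DG category of right $A_\infty$-modules: the image is a genuine DG category and the embedding is a quasi-equivalence onto it. This construction is functorial in $\mathcal{A}$, so the $G$-action transports by precomposition to a $G$-action on the module category, and the naturality of Yoneda shows that the embedding intertwines the two actions. I would then check that unitality, and the passage to a strictly unital model, survive $G$-equivariantly.

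For $(1)\simeq(2)$ the forgetful functor $(1)\to(2)$ is the candidate equivalence. The extra operations carried by an extended $A_\infty$-category are parametrised by a contractible complex, so a homotopy-coherent splitting can be constructed by induction on arity using the acyclicity of the operadic resolution that controls the extension. Over a field of characteristic zero this is routine in the non-equivariant case; equivariantly one argues instead that the \emph{space of $G$-equivariant splittings} is itself contractible, hence nonempty and unique up to coherent homotopy, which is precisely what is needed to invert the forgetful functor.

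The main obstacle will be the equivariance of strictification in $(2)\simeq(3)$. The classical rectification rests on auxiliary choices — contracting homotopies of bar complexes, or a choice of strictly unital minimal model — and for an arbitrary discrete group $G$ these cannot be forced to be $G$-invariant by averaging, since $1/|G|$ need not exist. I would circumvent this by regarding the totality of admissible choices as a $G$-object and proving that it is a \emph{contractible $G$-space}, so that a coherent $G$-equivariant choice exists; the characteristic-zero hypothesis enters precisely in establishing the acyclicity of the underlying obstruction complexes. Proving this equivariant contractibility — equivalently, the equivariant acyclicity of the complexes governing strictification — is the technical heart of the argument.
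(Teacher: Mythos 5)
The paper itself contains no in-line proof of this proposition: it is stated with a pointer to Proposition~8.4 of \cite{Ramses15}, whose argument (following Costello \cite{Costello07}) is carried out entirely at the level of the governing DGSM categories. In that framework, by Lemma~\ref{invinfinity} and the definitions preceding the proposition, categories (1) and (2) are literally the categories of h-split, respectively split, symmetric monoidal functors out of $\mathpzc{D}^+_{\Lambda,G}$, and Corollary~\ref{Mess} exhibits $\mathpzc{D}^+_{\Lambda,G}$ as \emph{free} as a DGSM category; rectification of h-split functors to split ones out of a free (cofibrant) source is then the standard mechanism, and the comparison with DG categories is the usual $A_\infty$/DG rectification. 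The crucial structural point is that the $G$-action is not external data on the functors: it is encoded by the twisted discs $\overline{(D^{\mathfrak{g}}(\lambda_i,\lambda_{i+1}),\varphi)}$, i.e.\ by \emph{morphisms} of the source category. Every rectification performed on functors is therefore automatically compatible with the group action, and no $G$-invariant choice is ever made. Your proposal takes a genuinely different route, treating the action as an external $G\to\aut(\gcc)$ and attacking the resulting invariance problem head-on --- which is exactly where it breaks.

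The gap is in your ``technical heart''. The paper's equivariant functors intertwine the actions \emph{strictly} ($\varphi'_\alpha\circ\EE=\EE\circ\varphi_\alpha$ on the nose), so what your inductive splitting argument needs are strict $G$-invariant choices, i.e.\ points of the \emph{fixed-point} object of the space of admissible choices --- not of its homotopy fixed points. Contractibility of the underlying space of choices does make the homotopy fixed points contractible (that part of your argument is fine, and needs no averaging), but it says nothing about strict fixed points: a contractible $G$-object can have empty strict fixed points (e.g.\ a free action), and for chain-level choices the obstructions live in $H^*(G,-)$, which does not vanish for a general infinite discrete $G$ even over a field of characteristic zero. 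So ``contractible $G$-space, hence a coherent equivariant choice exists'' produces only homotopy-coherent equivariance, and converting that into the strict equivariance the paper's definitions demand is itself another rectification problem, which your sketch does not address; as written the step fails or is unsupported. A secondary, fixable issue: your Yoneda inverse for $(2)\simeq(3)$ induces an action on the module category that is not the identity on objects (it sends $M$ to $M\circ\varphi_{g^{-1}}$), whereas the paper's equivariant categories require identity-on-objects actions; one must transport the action onto the Yoneda image by conjugating with the canonical isomorphisms $\homo(\varphi_{g^{-1}}(-),c)\cong\homo(-,\varphi_g(c))$, and this needs to be said. Both difficulties simply dissolve in the cited proof's formulation, where equivariance is part of the generators-and-relations presentation of $\mathpzc{D}^+_{\Lambda,G}$ rather than a constraint on auxiliary choices.
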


\section{Moduli spaces of geometric structures}


An important ingredient of our classification are moduli spaces of Riemann surfaces equipped with a principal $G$-bundle. This section, whose main reference is \cite{Giansiracusa14}, introduces the main definitions and results related to moduli spaces of geometric structures on surfaces. We also introduce open equivariant topological conformal field theory and the categories governing such theories together with its description in terms of generators and relations, which will lead us to the equivalence between open equivariant theories and Calabi-Yau categories.



\subsection{Strict 2-categories and functors} 

A \index{Strict 2-category} \textit{strict 2-category} is a category $\aaa$ enriched over $\cat$, the category of categories and functors, what means that it has:
\begin{enumerate}
\item A class of objects $\ob(\aaa)$;
\item for objects $a_1,a_2\in\ob(\aaa)$, a category $\homo_{\aaa}(a_1,a_2)$;
\item composition functors $\homo_{\aaa}(a_1,a_2)\times\homo_{\aaa}(a_2,a_3)\to\homo_{\aaa}(a_1,a_3)$ which are strictly associative and have a unit in $\homo_{\aaa}(a,a)$.
\end{enumerate}

The objects of the category $\homo_{\aaa}$ are called 1-morphisms whilst the morphisms are called 2-morphisms. We assume that all 2-morphisms are isomorphisms.

\begin{remark}
We shall denote by $\categ$ the strict 2-category whose objects are small categories and whose $\homo$ categories are the categories of functors and natural transformations.
\end{remark}

For 2-categories $\aaa$ and $\bbb$, a \index{2-functor} \textit{2-functor} $\FF:\aaa\to \bbb$ is a map $F:\ob(\aaa)\to \ob(\bbb)$ together with a functor $\homo_{\aaa}(a_1,a_2)\to\homo_{\bbb}(F(a_1),F(a_2))$ such that all these functors are strictly compatible with the composition functors.

\subsection{Surfaces with collars} 

Let $\Sigma$ be a surface with boundary partitioned into intervals and locally isomorphic to the cartesian product $[0,+\infty)\times [0,+\infty)$. For a boundary interval $J\subset \Sigma$, a \index{Collar}\textit{collar} of $J$ is a smooth embedding $\varpi$ of $(-1,0]\times[0,1]$ into $\Sigma$ such that:
\begin{enumerate}
\item It preserves boundaries and
\item is a diffeomorphism of $\{0\}\times [0,1]$ onto $J$. 
\end{enumerate}
A surface with a finite set of disjoint boundary intervals equipped with disjoint collars labelled by a set $I$, is called \index{$I$-collared surface} \textit{$I$-collared}. A diffeomorphism of $I$-collared surfaces $\Sigma_1\to\Sigma_2$ is a diffeomorphism of the underlying surfaces that respects the labelling and parametrization of the collars.
\\\\
For a surface $\Sigma$ with disjoint boundary intervals $J_1,J_2$ equipped with disjoint collars $\varpi_1,\varpi_2$ respectively, we can glue these two boundary intervals together and obtain a new smooth surface as follows: let $\Sigma'=\Sigma-(J_1\cup J_2)/\sim$, where $\sim$ describes the identification of $\varpi_1(x)$ with $\varpi_2(x)$ for each  $x\in(-1,0)\times [0,1]$.
\\\\
Collared surfaces and open embeddings form a category which will be denoted by $\surf$. Let $\disc\subset \surf$ denoted the full subcategory whose objects are disjoint unions of discs each having at least one collared boundary interval. We denote with $\widetilde{\surf}$ the category enriched over $\topo$ of finite type surfaces (possibly with boundary and locally modeled on $[0,+\infty)\times [0,+\infty)$) and open embeddings.

\subsection{Sheaves of geometric structures} 

A \index{Smooth sheaf} \textit{smooth sheaf} $\varsigma$ on $\widetilde{\surf}$ is an enriched functor $\widetilde{\surf}\to\topo$ sending pushout squares to homotopy pullback squares.
\begin{example}
Let us denote with $\varsigma(\Sigma)$ the space of geometric structures of a given type on an $I$-collared surface $\Sigma$. Some examples of such spaces are: 
\begin{enumerate}
\item The set of orientations on $\Sigma$. 
\item The space of principal $G$-bundles over $\Sigma$, which is the space of maps $\Sigma\to BG$. 
\end{enumerate}
\end{example}
A smooth sheaf $\varsigma$ is \index{Connected sheaf} \textit{connected} if $\varsigma((-1,0)\times I)$ is connected. If $\varsigma$ is connected, then we can choose a base-point $p\in \varsigma((-1,0)\times I)$. Let us consider $J\subset \partial\Sigma$ a boundary interval equipped with a collar $\varpi$. We say that a section $\sigma\in\varsigma(\Sigma)$ is trivial at $J$ if the pullback $\sigma\circ\varpi$ restricts to the chosen base-point $p$ on $(-1,0)\times I$. If a surface $\Sigma$ is endowed with a collection of disjoint collared boundary intervals $\{J_1\dots, J_n\}$, we write $\widetilde{\varsigma}(\Sigma)\subset \varsigma(\Sigma)$ for the subspace consisting of section that are trivial at $J_i$ for each $i\in\{1,\dots, n\}$.
\\\\
Let us denote with $A_\varsigma$ the space $\widetilde{\varsigma}(I\times I)$ of sections that are trivial at each side of the square $I\times I$, which we assume equipped with a collar at each of the intervals $\{0\}\times I$ and $\{1\}\times I$.

\begin{proposition}[Proposition 4.3.1 \cite{Giansiracusa14}]
Gluing squares side to side endows $A_\varsigma$ with an $A_\infty$ composition making it into a group-like $A_\infty$-monoid; the homotopy inverse map is induced by rotating the square $\pi$ $\rad$. Fixing a collared boundary interval $J$ on a surface $\Sigma$, there is a right $A_\infty$-action of $A_\varsigma$ on $\widetilde{\varsigma}(\Sigma)$ by gluing the right side of a square to $J$, and a left $A_\infty$-action given by gluing the left edge of the square to $J$.  
\end{proposition}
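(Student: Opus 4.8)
The plan is to exhibit $A_\varsigma$ as an algebra over the non-symmetric operad of little intervals---equivalently the $A_\infty$, or $E_1$, operad---in $\topo$, extracting every piece of structure from a single input, the sheaf condition, which converts the pushouts produced by gluing collared surfaces into homotopy pullbacks of section spaces. First I would construct the $n$-ary operations. Laying $n$ unit squares side by side produces the long rectangle $[0,n]\times I$ carrying a collar at each integer point of its horizontal edge; sections trivial at these internal seams all restrict to the base-point $p$ there, so they glue directly, defining a concatenation map $A_\varsigma^{\times n}\to\widetilde\varsigma([0,n]\times I)$. Composing with the rescaling diffeomorphism $[0,n]\times I\cong I\times I$ lands this in $A_\varsigma$. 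For $n=2$ this is the multiplication, and the globally trivial (constant-$p$) section is the unit.

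These operations depend on the chosen subdivision, so they are genuinely parametrised by the space of ways to cut $[0,1]$ into $n$ ordered collared subintervals; this space is contractible and, as $n$ varies, assembles into the little-intervals ($E_1$) operad. The role of the sheaf condition is to certify coherence: by sending the pushouts that present nested gluings to homotopy pullbacks it supplies the system of higher homotopies witnessing that the concatenation maps respect operadic composition, so that $A_\varsigma$ becomes an $E_1$-algebra in $\topo$, that is, an $A_\infty$-monoid. I expect this packaging---upgrading ``associative up to homotopy'' into a full operadic action with all its coherences---to be the main technical obstacle; the remaining geometry merely feeds into it.

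For the group-like property it suffices to show that the monoid $\pi_0(A_\varsigma)$ is a group, since an $E_1$-algebra in spaces is group-like exactly when $\pi_0$ inverts. Let $\tau\colon A_\varsigma\to A_\varsigma$ be rotation of the square through $\pi$; it interchanges the collared edges $\{0\}\times I$ and $\{1\}\times I$ while preserving triviality, so it is well defined, and in coordinates $(\tau\sigma)(x,y)=\sigma(1-x,1-y)$. Gluing $\sigma$ to $\tau\sigma$ yields a section on $[0,2]\times I$ invariant under the $\pi$-rotation about the centre $(1,\tfrac12)$; morally the horizontal direction plays the role of a loop coordinate, multiplication is concatenation, and $\tau$ is reversal, so $\mu(\sigma,\tau\sigma)$ is an out-and-back path. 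I would make this precise as an explicit null-homotopy: since $\sigma$ is trivial at its right edge the whole seam region equals $p$, and connectedness of $\varsigma((-1,0)\times I)$ lets one collapse the glued rectangle toward the seam, contracting both halves to the base-point and deforming the section to the trivial one. The product $\mu(\tau\sigma,\sigma)$ is treated symmetrically, so $[\tau\sigma]$ is a two-sided inverse of $[\sigma]$.

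Finally, the module structures arise by the identical mechanism applied to a fixed surface. Gluing the right edge $\{1\}\times I$ of a square to a chosen collared interval $J\subset\partial\Sigma$ extends the collar and produces a surface diffeomorphic to $\Sigma$; direct gluing of trivialised sections then gives a map $\widetilde\varsigma(\Sigma)\times A_\varsigma\to\widetilde\varsigma(\Sigma)$, and compatibility of nested gluings---again certified by the contractible configuration spaces and the sheaf condition---promotes it to a right $A_\infty$-action. Gluing the left edge $\{0\}\times I$ in place of the right gives the left action, and because the two gluings occur at disjoint edges of the square they commute up to coherent homotopy, as required.
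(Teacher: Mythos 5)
Note first that the paper you are working from does not prove this statement at all: it is imported verbatim as Proposition 4.3.1 of \cite{Giansiracusa14}, so the comparison below is with the argument in that cited source, which identifies $A_\varsigma$ with a loop space via the homotopy-pullback property rather than building operations by hand as you do.

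Your proposal has a genuine gap at its foundation: the direction of the maps. In this framework a smooth sheaf is \emph{defined} as an enriched functor $\widetilde{\surf}\to\topo$ sending pushout squares to homotopy pullback squares, so when two squares are glued along collars the sheaf condition provides a weak equivalence \emph{from} $\varsigma(\Sigma')$ \emph{to} the homotopy pullback of the section spaces of the pieces --- not a map in the other direction. Your claim that sections trivial at the internal seams ``glue directly'' to give an honest concatenation map $A_\varsigma^{\times n}\to\widetilde{\varsigma}([0,n]\times I)$ is therefore unavailable: the pair $(\sigma_1,\sigma_2)$ together with the constant path at $p$ is a point of the homotopy pullback, and producing from it a point of $\varsigma(\Sigma')$ requires inverting the comparison equivalence, which can only be done up to coherent homotopy. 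This inversion is precisely the content of the proposition, not a formality one can wave at with ``contractible subdivision spaces'': the standard repair is Segal-style, namely to set $Y_n\subset\varsigma([0,n]\times I)$ equal to the sections trivial at all $n+1$ seams, observe that the sheaf condition makes the restriction maps $Y_n\to A_\varsigma^{\times n}$ weak equivalences, and extract the $A_\infty$-monoid structure from the resulting zig-zags $A_\varsigma^{\times n}\xleftarrow{\;\simeq\;} Y_n\to A_\varsigma$. Your appeal to the sheaf condition as ``supplying the higher homotopies'' gestures at this but never confronts the fact that your binary multiplication does not exist as a map.

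The group-like step fails for a related but sharper reason: your proposed null-homotopy of $\mu(\sigma,\tau\sigma)$ --- ``collapse the glued rectangle toward the seam, contracting both halves to the base-point'' --- would require functoriality of $\varsigma$ along fold or collapse maps, but $\varsigma$ is only functorial for \emph{open embeddings}, and the fold $x\mapsto\min(x,2-x)$ is not one; moreover connectedness of $\varsigma((-1,0)\times I)$ is used in the setup only to choose the base-point $p$ and contributes no contraction. The argument that actually works, and which is the substance of the cited proof, is to present the square as the union of two overlapping half-strips, each with section space equivalent to $X:=\varsigma((-1,0)\times I)$, so that the sheaf condition gives $\varsigma(I\times I)\simeq X\times^{h}_{X}X$, i.e.\ a path space of $X$, and imposing triviality at the two vertical edges identifies $A_\varsigma\simeq\Omega_p X$. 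Under this equivalence gluing squares becomes loop concatenation and rotation by $\pi$ becomes loop reversal, so group-likeness and the identification of the homotopy inverse are automatic, and the one-sided $A_\infty$-actions on $\widetilde{\varsigma}(\Sigma)$ follow by the same zig-zag mechanism. Your overall architecture (little-intervals packaging, rotation as inverse, disjoint edges giving commuting actions) is sound in outline, but as written both the existence of the operations and the inverse property rest on maps that do not exist in this setting.
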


\begin{proposition}[Proposition 4.3.2 \cite{Giansiracusa14}]
Let $J_1$ and $J_2$ be two disjointly collared boundary intervals on a surface $\Sigma$, and let $\Sigma'$ be the result of gluing $J_1$ to $J_2$. There is a homotopy equivalence $\varsigma(\Sigma')\simeq \widetilde{\varsigma}(\Sigma)_{hA_\varsigma}$ where the action of $A_\varsigma$ is given, for a square with $\varsigma$-structure $K\in A_\varsigma$, by gluing the left edge of one copy of $K$ to $J_1$ and gluing the left edge of a second copy of $K$ to $J_2$.
\end{proposition}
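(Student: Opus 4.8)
The plan is to realise the gluing of $J_1$ to $J_2$ as a pushout of surfaces along collars, transport it through the sheaf axiom into a homotopy pullback of spaces, and then recognise that homotopy pullback —taken along a diagonal— as a homotopy orbit space. First I would set up the pushout. Let $C_1,C_2\subset\Sigma$ be the open collars of $J_1,J_2$, each diffeomorphic to $(-1,0)\times I$, and let $C\cong(-1,1)\times I$ be the rectangle obtained by gluing $C_1$ (as its left half) to $C_2$ (as its right half); this is precisely the collar of the glued interval in $\Sigma'$. Then $\Sigma'$ is the pushout of $\Sigma\hookleftarrow C_1\sqcup C_2\hookrightarrow C$, in which every map is an open embedding, so this is a pushout square in $\widetilde{\surf}$. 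Since $\varsigma$ sends pushout squares to homotopy pullback squares and is monoidal (so $\varsigma(C_1\sqcup C_2)\simeq\varsigma(C_1)\times\varsigma(C_2)$), I obtain
\[
\varsigma(\Sigma')\ \simeq\ \varsigma(\Sigma)\times^{h}_{\varsigma(C_1)\times\varsigma(C_2)}\varsigma(C).
\]

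Next I would identify the three corner spaces. By connectedness of $\varsigma$ on a collar, $\varsigma(C_1)\simeq\varsigma(C_2)\simeq\varsigma(C)$ is a connected pointed space which is a delooping $BA_\varsigma$ of the group-like $A_\infty$-monoid $A_\varsigma$ of Proposition 4.3.1, so that $\Omega\,\varsigma(C_1)\simeq A_\varsigma$. Each inclusion $C_i\hookrightarrow C$ is a homotopy equivalence, hence under these identifications the restriction map $\varsigma(C)\to\varsigma(C_1)\times\varsigma(C_2)$ is homotopic to the diagonal $\Delta\colon BA_\varsigma\to BA_\varsigma\times BA_\varsigma$. On the other side, the restriction $\varsigma(\Sigma)\to\varsigma(C_1)\times\varsigma(C_2)$ is a fibration whose homotopy fibre over the chosen base-point $(p,p)$ is, after a collar deformation, exactly the subspace $\widetilde{\varsigma}(\Sigma)$ of sections trivial at $J_1$ and $J_2$.

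Pulling this fibration back along $\Delta$ therefore produces a fibration over $BA_\varsigma$ with fibre $\widetilde{\varsigma}(\Sigma)$ whose total space is $\varsigma(\Sigma')$. A fibration over the classifying space of a group-like monoid, together with the monodromy action on its fibre, is a model for the homotopy orbit space; here the monodromy around the diagonal is the simultaneous action at both $J_1$ and $J_2$, i.e. the diagonal of the two left-gluing actions of Proposition 4.3.1. This is exactly the action in the statement, and so $\varsigma(\Sigma')\simeq\widetilde{\varsigma}(\Sigma)_{hA_\varsigma}$.

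The \emph{main obstacle} will be this last identification: checking that the pullback along the diagonal yields the homotopy orbit space for the \emph{stated} action and not some other one. This requires (i) verifying that the connected space $\varsigma(C_1)$ is genuinely a delooping of $A_\varsigma$, with the loop-concatenation monoid structure on $\Omega\,\varsigma(C_1)$ matching the side-to-side gluing of squares of Proposition 4.3.1; (ii) tracking the two factors of $BA_\varsigma\times BA_\varsigma$ back to the collars at $J_1$ and $J_2$, so that the diagonal becomes the simultaneous left-gluing action; and (iii) a careful collar deformation identifying the honest subspace $\widetilde{\varsigma}(\Sigma)$ with the homotopy fibre of the restriction. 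These three points are where the collars, the connectedness hypothesis, and the group-likeness of $A_\varsigma$ are genuinely used.
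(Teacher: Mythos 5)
The paper itself contains no proof of this statement: it is quoted verbatim from [Giansiracusa14] (Proposition 4.3.2) as imported background, so there is no in-paper argument to compare against. That said, your outline is the standard argument behind the cited result: apply the sheaf axiom to a gluing square to obtain a homotopy pullback, identify the values on collars with a delooping $BA_\varsigma$, and recognise the pullback of the restriction fibration along the diagonal as the Borel construction with monodromy the simultaneous left-gluing action. The strategy is right, and the three obstacles you isolate at the end are exactly where the content lies.

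One concrete slip needs repair. As literally written, the square $\Sigma\hookleftarrow C_1\sqcup C_2\hookrightarrow C$ with $C\cong(-1,1)\times I$ does \emph{not} have pushout $\Sigma'$: the collars $C_i$ are open, so the closed intervals $J_1,J_2\subset\Sigma$ are identified with nothing in $C$, while the middle slice $\{0\}\times I\subset C$ is identified with nothing in $\Sigma$; the pushout is a non-Hausdorff space in which the glued interval appears three times. You must take the other leg to be $\Sigma\setminus(J_1\cup J_2)$, and then argue (by a collar isotopy, i.e.\ the inclusion $\Sigma\setminus(J_1\cup J_2)\hookrightarrow\Sigma$ is isotopic to a diffeomorphism onto $\Sigma$ pushed in along the collars) that this deletion changes neither $\varsigma$ nor $\widetilde{\varsigma}$ up to equivalence. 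Note also that the paper's gluing convention deletes $J_1\cup J_2$ and overlaps the two collars onto a single rectangle, so a sentence identifying your insert-a-rectangle model of $\Sigma'$ with that convention is needed before the square can be fed to the sheaf axiom. Two smaller points: $\varsigma(C_1\sqcup C_2)\simeq\varsigma(C_1)\times\varsigma(C_2)$ should not be invoked as an extra monoidality hypothesis --- it follows from the sheaf axiom applied to the pushout over the empty surface, given $\varsigma(\emptyset)\simeq\ast$; and your obstacle (i), the equivalence $\varsigma((-1,0)\times I)\simeq BA_\varsigma$, is the genuine mathematical work here, requiring connectedness together with the group-likeness from Proposition 4.3.1 (scanning the square by vertical slices identifies $A_\varsigma\simeq\Omega_p\,\varsigma((-1,0)\times I)$, and the recognition principle then deloops). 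Deferring it is acceptable in a proposal, but without it the argument is a framework rather than a proof.
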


\subsection{The modular operad of moduli spaces} 

It is convenient to recall the concepts of cyclic and modular operads. We start off with cyclic operads, which were introduced by Getzler, Kapranov and Kontsevich in order to handle algebras endowed with inner products $\langle -, -\rangle$ satisfying $\langle a\cdot b, c\rangle=\langle a, b\cdot c\rangle$ in the associative case and $\langle[a,b], c \rangle=\langle a, [b,c] \rangle$ in the Lie case (Section 1.14 \cite{MaShSt02}). A \index{Cyclic operad} \textit{cyclic operad} in a symmetric monoidal category $\gcc$ is given by a functor $\PP$ from the category of non-empty sets and bijections to $\gcc$ and for each pair of finite sets $I,J$ with elements $i\in I$ and $j\in J$ a composition morphism
$$\circ_{i,j}:\PP(I)\otimes \PP(J)\to \PP(I\sqcup J-\{i,j\})$$
natural in $I$ and $J$ and satisfying the following axioms:
\begin{enumerate}
\item Given finite sets $I,J,K$ and elements $i\in I$, $j_1,j_2\in J$ and $k\in K$, the following diagram commutes:
\[
\xymatrix{
\PP(I)\otimes \PP(J)\otimes \PP(K) \ar[r]^-{\id\otimes\circ_{j_2,k}} \ar[d]_-{\circ_{i,j_1}\otimes\id} & \PP(I)\otimes \PP(J\sqcup K-\{j_2,k\}) \ar[d]^-{\circ_{i,j_1}} \\
\PP(I\sqcup J-\{i,j_1\})\otimes \PP(K) \ar[r]_-{\circ_{j_2,k}} & \PP(I\sqcup J\sqcup K-\{i,j_1,j_2,k\})
}
\]
\item Given finite sets $I,J$ and elements $i\in I$ and $j\in J$ the following diagram commutes:
\[
\xymatrix{
\PP(I)\otimes \PP(J) \ar[r]^-{\circ_{i,j}} \ar[d]_-{(!)} & \PP(I\sqcup J-\{i,j\}) \ar[d]^-{\PP(\tau)} \\
\PP(J)\otimes \PP(I)\ar[r]_-{\circ_{j,i}} & \PP(J\sqcup I-\{j,i\})
}
\]
where $\tau$ denotes the bijection $I\sqcup J-\{i,j\}\cong J\sqcup I-\{j,i\}$ and (!) denotes a swapping.
\item For each set $A=\{a,b\}$ of cardinality 2 there is a morphism $u_A:1_{\gcc}\to\PP(A)$, where $1_{\gcc}$ denotes the monoidal unit, that is natural in $A$ and such that for any finite set and an element $i\in I$ the following diagram commutes:
\[
\xymatrix{
1_{\gcc}\otimes \PP(I) \ar[r]^-{u_A\otimes \id} \ar[dr]_-{\PP(\beta)}& \PP(A)\otimes \PP(I)\ar[d]^-{\circ_{a,i}} \\
 & \PP(A\sqcup I-\{a, i\})
}
\]
where $\beta$ is the canoncial bijection $I\cong A\sqcup I-\{a,i\}$ sending $i$ to $b$.
\end{enumerate}

We define a \index{Modular operad} \textit{modular operad} in $\gcc$ as a cyclic operad $\PP$ together with natural self-composition maps $\circ_{i,j}:\PP(I)\to\PP(I-\{i,j\})$ that commute with the cyclic operad composition maps and with each other.
\begin{example}
The mother of modular operads is given by the Deligne-Mumford moduli spaces of stable curves of genus $g$ with $n+1$ marked points, where the composition maps are defined by interesecting along their marked points.
\end{example}
Given a cyclic operad $\mathfrak{O}$ we define its \index{Modular envelope} \textit{modular envelope} as the modular operad freely generated by $\mathfrak{O}$.
\\\\
It will also be convenient to recall some basics from graph theory. Let us remind that a (finte) graph $\gamma$ is given by a (finite) set of vertices $V(\gamma)$, a (finite) set of half-edges $H(\gamma)$, a map $\lambda: H(\gamma)\to V(\gamma)$ and an involution $\iota: H(\gamma)\to H(\gamma)$. For a graph $\gamma$, we say that two half-edges $a,b$ form an edge if $\iota(a)=b$, a half-edge $a$ is connected to a vertex $v$ if $\lambda(a)=v$. A leg in $\gamma$ is a univalent vertex. A corolla is a graph consisting of a single vertex with several legs connected to it.
\\\\
The topological realization of a graph $\gamma$ is a 1-dimensional CW complex. We call tree a graph whose topological realization is contractible; a forest is a union of trees. 
\\\\
Let us denote $\grap$ the symmetric monoidal category whose objects are disjoint unions of corollas and where the arrows are given by graphs; the latter meaning, intuitively, that we may think of such a morphism as assembling a bunch of corollas to create a graph followed by contracting all the edges so the result is again a bunch of corollas. There is a subcategory $\forest\subset\grap$ whose objects are those of $\grap$ with no isolated vertices and whose arrows are given by forests.
\\\\
If $2\disc$ and $2\surf$ are the strict 2-categories with the sames objects as $\disc$ and $\surf$, respectively, and with space of diffeomorphisms given by the grupoid of diffeomorphisms and isotopy classes of isotopies, let us write $S:\grap\to(\categ\downarrow2\surf)$ for the strict 2-functor which sends a corolla $\tau$ to the strict 2-category of collared surfaces with components identified with $\tau$ and collared boundary intervals compatibly identified with the legs of $\tau$. The functor to $2\surf$ is given by forgetting the extra identification data. We write $D:\forest\to(\categ\downarrow2\disc)$. These two functors define cyclic and modular operads (respectively). For a corolla $\tau$, $S(\tau)$ consists of a 2-category $\ccc$ and a functor $\FF:\ccc\to\surf$. 
\\\\
We define the moduli space modular operad $\MM_\varsigma$ associated with a connected smooth sheaf $\varsigma$ by sending an object $\tau\in\ob(\grap)$ to the homotopy colimit
$$\MM_\varsigma(\tau):=\hoc_{S(\tau)}\widetilde{\varsigma}.$$
The cyclic operad $\DD_\varsigma$ of moduli space of discs is defined as
$$\DD_\varsigma(\tau):=\hoc_{D(\tau)}\widetilde{\varsigma}.$$

The following result is the analogue to Proposition 3.3.4 in \cite{Costello04}:

\begin{theorem}[Theorem 4.5.2 \cite{Giansiracusa14}] \label{jeffrey}
The derived modular envelope of the cyclic operad $\DD_\varsigma$ is weakly homotopy equivalent as a modular operad to $\MM_\varsigma$.
\end{theorem}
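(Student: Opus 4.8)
The plan is to unwind both sides of the claimed equivalence as homotopy colimits and to compare them through an explicit gluing map, proving it a weak equivalence by a cofinality argument. The derived modular envelope $\LL\DD_\varsigma$ is, by definition, the homotopy left Kan extension of the cyclic operad $\DD_\varsigma$ along the inclusion $\forest\hookrightarrow\grap$. Evaluated at a corolla $\tau$ it is therefore a homotopy colimit indexed by graphs $\gamma$ whose external legs are identified with $\tau$, with a copy of $\DD_\varsigma$ placed at each vertex. Since each factor $\DD_\varsigma(\tau_v)=\hoc_{D(\tau_v)}\widetilde\varsigma$ is itself a homotopy colimit, a Fubini argument for iterated homotopy colimits lets me repackage $\LL\DD_\varsigma(\tau)$ as a single homotopy colimit $\hoc_{\EE(\tau)}\widetilde\varsigma$, where an object of $\EE(\tau)$ is a graph $\gamma$ decorated at each vertex by a disc carrying a $\varsigma$-structure trivial on its collars, and morphisms combine edge contractions with the identifications built into $S$ and $D$.

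Next I would construct the comparison map $\Phi\colon\LL\DD_\varsigma(\tau)\to\MM_\varsigma(\tau)$. On indexing categories $\Phi$ is induced by the functor $\EE(\tau)\to S(\tau)$ that glues the decorated discs sitting at the vertices of $\gamma$ along its internal edges, producing a single collared surface of type $\tau$ carrying a $\varsigma$-structure. What makes this well defined on homotopy colimits is the gluing proposition (Proposition 4.3.2 of \cite{Giansiracusa14}): gluing two collared intervals realizes the resulting sheaf value as the homotopy quotient of $\widetilde\varsigma$ by the $A_\varsigma$-action, so the identifications imposed along the edges of $\gamma$ are precisely those already present in the bar-type resolution computing $\LL\DD_\varsigma$.

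To prove that $\Phi$ is a weak equivalence I would invoke Quillen's Theorem A, reducing the statement to the contractibility of comma categories: for every collared surface $\Sigma\in S(\tau)$ equipped with a $\varsigma$-structure, the category of its decompositions into $\varsigma$-decorated discs must have weakly contractible nerve. Forgetting the decoration, this is the classical contractibility of the arc complex of a surface with marked boundary intervals (Harer--Penner--Strebel, in the formulation used by Costello in \cite{Costello04}). To lift it to the sheaf level I would use that $\varsigma$ is connected and that $A_\varsigma$ is group-like (Proposition 4.3.1 of \cite{Giansiracusa14}): the forgetful map from the space of decorated decompositions to the underlying arc complex is a homotopy fibration whose fibers are homogeneous spaces for the contractible, group-like $A_\varsigma$-action, hence contractible, so contractibility is transported upward.

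The hard part is exactly this last contractibility statement in the presence of $\varsigma$: one must verify that decorating the arc complex with sheaf data does not destroy its contractibility, which is where connectedness of $\varsigma$ and the group-like structure of $A_\varsigma$ are indispensable. Granting it, a final routine check that $\Phi$ intertwines the self-composition (gluing) maps on $\LL\DD_\varsigma$ with those on $\MM_\varsigma$ --- immediate, since on both sides these are implemented by gluing along edges --- promotes the weak equivalence to one of modular operads, which is the assertion of the theorem.
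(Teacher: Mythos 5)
This theorem is quoted by the paper from its source (Theorem 4.5.2 of \cite{Giansiracusa14}) and is not proved here, so the only meaningful comparison is with the proof in that reference. Your architecture does reproduce it: unpacking the derived modular envelope as a homotopy colimit over graphs with disc-decorated vertices, comparing with $\MM_\varsigma$ via the gluing map, reducing the indexing-category comparison to the contractibility of arc-complex-type categories of decompositions (the same input Costello uses in \cite{Costello04}), and feeding in the $\varsigma$-decorations through the group-like $A_\infty$-monoid $A_\varsigma$ and the homotopy-quotient description of gluing from Propositions 4.3.1 and 4.3.2 of \cite{Giansiracusa14}.

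There is, however, a genuine error at exactly the step you single out as the hard part. You claim the fibers of the forgetful map from decorated decompositions to the underlying arc complex are ``homogeneous spaces for the contractible, group-like $A_\varsigma$-action, hence contractible.'' But $A_\varsigma$ is group-like, not contractible: for the sheaf of principal $G$-bundles relevant to this paper one has $A_\varsigma\simeq \Omega BG\simeq G$. Given a fixed $\varsigma$-structure on $\Sigma$ and a fixed system of cutting arcs, the space of ways to trivialize the structure along the collars of the arcs (so as to exhibit it as glued from decorated discs) is a torsor over a product of copies of $A_\varsigma$, one per arc --- weakly equivalent to a power of $A_\varsigma$, not to a point. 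Were your contractibility claim true, the theorem would collapse to Costello's non-equivariant statement and $\MM_\varsigma$ would retain no bundle data at all. The correct mechanism, which you invoke when constructing the comparison map but then bypass in the cofinality step, is not fiberwise contractibility over the arc complex: rather, taking the homotopy colimit of the decorations along each edge implements the homotopy quotient $\widetilde{\varsigma}(\Sigma)_{hA_\varsigma}\simeq\varsigma(\Sigma')$ of Proposition 4.3.2, so that gluing along a full decomposition reconstructs $\varsigma(\Sigma)$ up to weak equivalence, and only then does contractibility of the arc complex collapse the remaining decomposition direction. Quillen's Theorem A applied to your comma categories, with contractibility justified as you state it, would therefore fail; you need the fiberwise homotopy-colimit comparison over the surface category (push the decorations forward first, then compare), which is how the argument in \cite{Giansiracusa14} actually runs.
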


\section{Open equivariant TCFTs}

This section contains the equivalence between open equivariant topological conformal field theories and Calabi-Yau $A_\infty$-categories endowed with a group action. The reasonings rely, basically, on the description of the categories governing these theories and follow the steps taken in \cite{Costello07} and \cite{Ramses15}. Henceforth we will consider a field $\K$ of characteristic zero. 

\subsection{Moduli space of surfaces with principal $G$-bundle}

Let $\Lambda$ be a set of D-branes and $G$ a discrete group. We define a topological category $\mmm^G_\Lambda$ where:
\begin{enumerate}
\item The objects class is made of triples of the form $\alpha:= ([O], s, t)$ for $O\in \mathbb{N}$ and maps $s,t:[O]\to \Lambda$. We write $[O]$ for $\{0,\dots, O-1\}$;
\item the space of morphims $\mmm^G_\Lambda(\alpha,\beta)$ is given by moduli spaces $\mmm^G_\Lambda$ (mind the abuse of notation) of collared Riemann surfaces $\Sigma$ equipped with a principal $G$-bundle with a trivialization of the $G$-bundle over each boundary component. The surfaces $\Sigma$ has open boundary components given by disjoint parameterised intervals, embedded in the boundary and labelled by $[O]$, and free boundary components, which are intervals in $\partial\Sigma$ and must be labelled by D-branes in a way compatible with the labelling $\{s(i), t(i)\}$. 
Write $T$ for the total space of the trivial bundle over an interval and consider $\alpha:=([O_\alpha], s, t)$ and $\beta:=([O_\beta], s',t')$; a morphism in $\mmm^G_\Lambda(\alpha,\beta)$ is given by the moduli space $\mmm^G_\Lambda$ of Riemann surfaces $\Sigma$ connecting intervals together with a principal $G$-bundle $\pi:P\to \Sigma$ such that $P_{|\text{incoming boundary}}=\bigsqcup_iT_i$ and $P_{|\text{outgoing boundary}}=\bigsqcup_jT_j$
for $i\in \{0,\dots, O_\alpha-1\}$ and $j\in\{0,\dots, O_\beta-1\}$.
Open intervals have associated an ordered pair $\{s(i), t(i)\}$ (for $0\leq i\leq O-1$), indicating where the interval begins and where it ends.
\end{enumerate}

Composition of morphisms is given by gluing surfaces and the associated bundles; the trivializations are a required ingredient when we glue two $G$-bundles together. We glue together incoming open (resp. closed) boundary components with outgoing open boundary components. Open boundary components can only be glued together if their D-brane labelling agree. Disjoint union makes $\mmm^G_\Lambda$ into a symmetric monoidal category.
\\\\
We require the positive boundary condition: Riemann surfaces are required to have at least one incoming closed boundary component on each connected component.

\begin{remark}
Although surfaces in $\mmm^G_\Lambda(\alpha,\beta)$ are required to be stable, we allow the following exceptional surfaces: the disc and the annulus with no open or closed boundary components and only free boundary components. These surfaces are unstable and so we define their associated moduli space to be a point.
\end{remark}


Let us consider the functor $\CC:\topo\to\comp_{\K}$ of singular chains. Applying the functor $\CC$ to $\mmm^G_\Lambda$ yields a 
differential graded symmetric monoidal category $\goo^G_\Lambda=\CC(\mmm^G_\Lambda)$ whose objects are triples $\alpha=([O], s, t)$ and where $\homo_{\goo^G_\Lambda}(\alpha, \beta):=\CC(\homo_{\mmm^G_\Lambda}(\alpha, \beta)).$

\begin{remark}
Henceforth, all the categories will be differential graded symmetric monoidal categories (DGSM for short), and all the functors will be assumed to be differential graded functors.
\end{remark}

Given DGSM categories $(\aaa,\sqcup)$ and $(\bbb,\otimes)$, a symmetric monoidal functor $\FF:(\aaa,\sqcup)\to(\bbb,\otimes)$ satisfying $\FF(a\sqcup b)\cong \FF(a)\otimes \FF(b)$ for objects $a,b\in \ob(\aaa)$ is called \textit{split}. If we have quasi-isomorphisms instead, we talk about an \index{h-split functor} \textit{h-split functor}. 
\\\\
An \index{Open equivariant TCFT} \textit{open equivariant topological conformal field theory} (henceforth an open ETCFT) is a pair $(\Lambda,\FF)$ where $\Lambda$ is finite set of D-branes and $\FF$ is a h-split symmetric monoidal functor of the form $\FF:\goo^G_\Lambda\to \comp_{\K}$. 

\subsection{Homotopical approach to moduli spaces of surfaces with principal $G$-bundle}

Vector bundles over contractible spaces are always trivial. This fact leads, if we consider $\varsigma(\Sigma)$ as the space of principal $G$-bundles over a Riemann surface $\Sigma$, to the following weak homotopy equivalence of cyclic operads: $D_{g,n,I}\simeq \DD_\varsigma$, where $D_{g,n,I}$ is the cyclic operad defined in \cite{Costello04} of Riemann surfaces with boundary, with marked points and possibly nodes on the boundary whose irreducible components are discs. The homotopy equivalence follows from Theorem 1.0.1 \cite{Costello04}. By considering their modular envelopes we obtain:
$$\mo(D_{g,n,I})\simeq \mo(\DD_\varsigma).$$

Following \cite{MacLane65}, we can think of an operad $\PP$ as a category with objects given by $\mathbb{N}$ and space of morphisms of the form $\PP(n,1)$, for $n\in \mathbb{N}$. This point of view helps to understand that $\MM_\varsigma$ is equivalent to $\mmm^G_\Lambda$ when we think of the space $\varsigma(\Sigma)$ as the space of principal $G$-bundles over a Riemann surface $\Sigma$. Theorem \ref{jeffrey} applies now in order to conclude:
$$\MM_\varsigma\simeq\mo(\DD_\varsigma)\simeq \mo(D_{g,n,I}).$$

Keeping the notations, we define the moduli space $\overline{\nnn}^G_\Lambda(\alpha,\beta)$ of stable Riemann surfaces with principal $G$-bundle as follows: its elements are stable collared Riemann surfaces with $O_\alpha$ incoming boundary components equipped with a principal $G$-bundle and a trivialization of the $G$-bundle over each component. Surfaces have $O_\beta$ outgoing boundary components. There are open marked points labelled by $[O_\alpha]$ and $[O_\beta]$ which are distributed all along the boundary components of the surfaces and replace the intervals with the trivialization. Surfaces in $\overline{\nnn}^G_\Lambda(\alpha,\beta)$ have free boundary components, which are the intervals between open marked points and those components with no marked points on them; free boundary components must be labelled by D-branes in $\Lambda$ in a way compatible with the maps $s,t:[O]\to \Lambda$. Surfaces in $\overline{\nnn}^G_\Lambda(\alpha,\beta)$ may have nodes and marked points, but we only allow boundary nodes and marked points on the boundary.
\\\\
Let us remark that, although surfaces in $\overline{\nnn}^G_\Lambda(\alpha,\beta)$ are asked to be stable, we allow the following exceptional surfaces: the disc with zero, one or two open marked points and the annulus with no open or closed points. Let $\nnn^G_\Lambda(\alpha,\beta)\subset \overline{\nnn}^G_\Lambda(\alpha,\beta)$ be the subspace of non-singular Riemann surfaces with principal $G$-bundle. These surfaces are unstable and so we define their associated moduli space a point.
\\\\
Since homotopy equivalences of topological spaces induce isomorphisms between the sets of isomorphism classes of principal $G$-bundles over those spaces, the arguments used in Section 6.1 \cite{Ramses15} apply in the equivariant setting to conclude:
\begin{proposition}
There exists a homotopy equivalence $\mmm^G_\Lambda(\alpha,\beta)\simeq \overline{\nnn}^G_\Lambda(\alpha,\beta)$ of orbi-spaces.
\end{proposition}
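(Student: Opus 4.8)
The plan is to deduce the statement from its non-equivariant counterpart, established in Section 6.1 of \cite{Ramses15}, by observing that the principal $G$-bundle data is a homotopy-invariant decoration that transports along every homotopy equivalence used there. Recall that the non-equivariant equivalence rests on two ingredients: first, the space of collar parameterizations of a boundary interval is contractible, so replacing a parameterized open boundary interval by a single open marked point is a homotopy equivalence; second, a ribbon-graph decomposition of the moduli of bordered surfaces identifies a stable surface, up to deformation retraction onto its spine, with a forest of discs glued along boundary nodes. These two facts are exactly what relate the interval-gluing description underlying $\mmm_\Lambda$ to the marked-point-with-nodes description underlying $\overline{\nnn}_\Lambda$.

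Concretely, I would phrase both sides operadically. Taking $\varsigma$ to be the connected smooth sheaf sending a surface $\Sigma$ to its space of principal $G$-bundles, that is the mapping space into $BG$, a point of $\widetilde{\varsigma}(\Sigma)$ is precisely a $G$-bundle together with a trivialization over each collared boundary interval, so that $\mmm^G_\Lambda(\alpha,\beta)$ is identified with the relevant part of the moduli space modular operad $\MM_\varsigma=\hoc_{S(\tau)}\widetilde{\varsigma}$. On the other side, a nodal surface of $\overline{\nnn}^G_\Lambda(\alpha,\beta)$ whose irreducible components are discs glued at boundary nodes is exactly a free modular gluing of the disc cyclic operad, so $\overline{\nnn}^G_\Lambda(\alpha,\beta)$ is the corresponding part of the derived modular envelope $\mo(\DD_\varsigma)$, with $\DD_\varsigma=\hoc_{D(\tau)}\widetilde{\varsigma}$. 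With these identifications in place, \thmref{jeffrey} yields the weak homotopy equivalence
$$\mmm^G_\Lambda(\alpha,\beta)\simeq\MM_\varsigma\simeq\mo(\DD_\varsigma)\simeq\overline{\nnn}^G_\Lambda(\alpha,\beta).$$
It then remains to justify that decorating by $G$-bundles does not disturb the comparison, which is where the homotopy-colimit formulation does the work: since $\varsigma=\operatorname{Map}(-,BG)$ is homotopy invariant and connected, it carries the homotopy equivalences of indexing surface-moduli from the non-equivariant argument to weak equivalences of the homotopy colimits $\hoc\widetilde{\varsigma}$; equivalently, a homotopy equivalence of underlying spaces induces a bijection on isomorphism classes of principal $G$-bundles, so the bundle strata on the two sides correspond. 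The unstable exceptional cases, namely the disc with up to two open marked points and the annulus with only free boundary, are assigned a point on both sides, so they match automatically.

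I expect the main obstacle to be working at the level of orbi-spaces rather than coarse moduli. Both $\mmm^G_\Lambda(\alpha,\beta)$ and $\overline{\nnn}^G_\Lambda(\alpha,\beta)$ are quotients by automorphisms, namely mapping-class groups together with bundle automorphisms, so the equivalence must be constructed equivariantly and shown to assemble into a morphism of modular operads, rather than merely producing a homotopy equivalence of total spaces. The delicate bookkeeping is the compatibility of the boundary trivializations under gluing: one must check that the trivialization data recorded along parameterized intervals in $\mmm^G_\Lambda$ corresponds coherently to the bundle restrictions at the marked points and nodes of $\overline{\nnn}^G_\Lambda$, so that the equivalence respects composition. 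The connectedness of $\varsigma$ and the use of homotopy rather than strict colimits are precisely what make this coherence achievable.
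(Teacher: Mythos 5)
There is a genuine gap, and it sits exactly where you switch from your opening paragraph to the ``concrete'' operadic implementation. The derived modular envelope $\mo(\DD_\varsigma)$ is built by freely gluing discs along boundary points, so under your identifications it corresponds to $\mathpzc{D}^G_\Lambda(\alpha,\beta)$, the subspace of $\overline{\nnn}^G_\Lambda(\alpha,\beta)$ consisting of stable surfaces \emph{all of whose irreducible components are discs} --- not to $\overline{\nnn}^G_\Lambda(\alpha,\beta)$ itself, which also contains, for instance, smooth higher-genus surfaces with open marked points and no nodes at all; your phrase ``a nodal surface of $\overline{\nnn}^G_\Lambda$ whose irreducible components are discs'' quietly restricts to this stratum and then treats the restriction as if it were the whole space. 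Consequently \thmref{jeffrey} gives you $\mmm^G_\Lambda\simeq\MM_\varsigma\simeq\mo(\DD_\varsigma)\simeq\mathpzc{D}^G_\Lambda$, and the missing link $\mathpzc{D}^G_\Lambda\simeq\overline{\nnn}^G_\Lambda$ is not automatic: it is the equivariant analogue of Costello's ribbon-graph theorem and is precisely the \emph{next} proposition of the paper, whose proof is the concatenation $\mathpzc{D}^G_\Lambda\simeq \mo(D_{g,n,I})\simeq \MM_\varsigma\simeq \mmm^G_\Lambda\simeq \overline{\nnn}^G_\Lambda$ --- the last arrow being the very statement you are asked to prove. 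Within the paper's logical structure your proposed chain therefore begs the question, or at best proves a different (deeper) equivalence while leaving the claimed one untouched.

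The statement at hand is the soft comparison, and your first paragraph together with your parenthetical remark about classification of bundles already contains the paper's entire proof: the non-equivariant homotopy equivalence $\mmm_\Lambda(\alpha,\beta)\simeq\overline{\nnn}_\Lambda(\alpha,\beta)$ of Section 6.1 of \cite{Ramses15} (collapsing parameterized boundary intervals to marked points and comparing with the nodal moduli) lifts to the $G$-decorated setting because homotopy equivalences of topological spaces induce bijections on isomorphism classes of principal $G$-bundles, so the bundle-with-boundary-trivialization data transports along each equivalence used there. Had you developed exactly that observation --- checking, as you yourself correctly flag, that the transport is compatible with the mapping class group and bundle automorphisms so that it descends to orbi-spaces --- you would have reproduced the paper's argument. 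The modular-envelope detour should be deleted, not repaired; that machinery belongs to the subsequent proposition, where the present result is consumed as an ingredient.
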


Let us define $\mathpzc{D}^G_\Lambda(\alpha,\beta)\subset \overline{\nnn}^G_\Lambda(\alpha,\beta)$ as the subspace consisting of Riemann surfaces whose irreducible components are all discs.
\\\\
As we are dealing with not-necessarily connected graphs, we have to take the modular envelope of $D_{g,n,I}$ to remark that it is equivalent to $\mathpzc{D}^G_\Lambda$. Furthermore, the work done in \cite{Costello04, Costello06} applies in our setting to provide $\overline{\nnn}^G_\Lambda$ with an orbi-space structure, whereas $\mathpzc{D}^G_\Lambda$ is endowed with such structure using the same arguments used in Section 5.2.1 \cite{thramses15}. All together leads to the following:

\begin{proposition}
There is a homotopy equivalence of orbi-spaces $\overline{\nnn}^G_\Lambda(\alpha,\beta)\cong \mathpzc{D}^G_\Lambda(\alpha,\beta)$.
\end{proposition}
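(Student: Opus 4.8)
The plan is to assemble the chain of equivalences built up in this subsection, realizing both spaces as components of the two sides of Theorem~\ref{jeffrey}. First I would identify $\mathpzc{D}^G_\Lambda(\alpha,\beta)$ with the appropriate component of the derived modular envelope $\mo(\DD_\varsigma)$ of the cyclic operad of discs, where $\varsigma$ is the sheaf sending a surface to its space of principal $G$-bundles. A surface all of whose irreducible components are discs is exactly a configuration obtained by gluing discs along boundary intervals and boundary nodes; this is precisely the combinatorial content of the free modular operad generated by $\DD_\varsigma$, the graph recording the gluing pattern and each vertex carrying the $\widetilde{\varsigma}$-structure on the corresponding disc. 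The D-brane labels live on the free boundary arcs and are preserved under every gluing, so they merely select the component compatible with the source and target $\alpha,\beta$. Since the surfaces here need be neither connected nor simply connected, it is essential to use the modular rather than the cyclic envelope, as already noted above; combined with $D_{g,n,I}\simeq \DD_\varsigma$ this gives $\mathpzc{D}^G_\Lambda(\alpha,\beta)\simeq\mo(D_{g,n,I})\simeq\mo(\DD_\varsigma)$.

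Next I would recall the identifications already obtained. The previous two propositions give $\mmm^G_\Lambda(\alpha,\beta)\simeq \overline{\nnn}^G_\Lambda(\alpha,\beta)$, and viewing an operad as a category with object set $\mathbb{N}$ in the sense of \cite{MacLane65} yields $\MM_\varsigma\simeq \mmm^G_\Lambda$ once $\varsigma$ is the sheaf of principal $G$-bundles, so $\overline{\nnn}^G_\Lambda(\alpha,\beta)$ is the corresponding component of the moduli space modular operad $\MM_\varsigma$. Theorem~\ref{jeffrey} then supplies a weak homotopy equivalence of modular operads $\mo(\DD_\varsigma)\simeq \MM_\varsigma$, and under the identifications above the inclusion $\mathpzc{D}^G_\Lambda(\alpha,\beta)\hookrightarrow \overline{\nnn}^G_\Lambda(\alpha,\beta)$ corresponds to the canonical comparison map from the modular envelope to the moduli modular operad. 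Reading off the component indexed by $(\alpha,\beta)$ then delivers the desired homotopy equivalence.

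Two points need care. The hypotheses of Theorem~\ref{jeffrey} require $\varsigma$ to be a connected smooth sheaf; for a discrete group $G$ the space of principal $G$-bundles over the contractible strip $(-1,0)\times I$ has the homotopy type of $BG=K(G,1)$, which is connected, so the machinery of the previous section applies, and, as remarked, homotopy equivalences of base spaces induce bijections on isomorphism classes of $G$-bundles, which is exactly what lets the non-equivariant comparison lift. The genuine obstacle is matching the orbi-space structures: that on $\overline{\nnn}^G_\Lambda$ is produced by the arguments of \cite{Costello04, Costello06}, whereas that on $\mathpzc{D}^G_\Lambda$ is produced by the arguments of Section~5.2.1 of \cite{thramses15}, so one must verify that the comparison map is compatible with these two orbifold presentations and not merely a homotopy equivalence of underlying spaces. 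I would handle this as in the non-equivariant case of \cite{Ramses15}, checking that the isotropy groups — automorphisms of a surface together with its $G$-bundle, fixing the labelled boundary — agree on corresponding strata, and that the unstable exceptional surfaces, namely the disc with at most two open marked points and the free annulus, declared to be points on both sides, are matched consistently.
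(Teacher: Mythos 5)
Your argument is correct and takes essentially the same route as the paper, whose proof is precisely the concatenation of homotopy equivalences $\mathpzc{D}^G_\Lambda\simeq \mo(D_{g,n,I})\simeq \MM_\varsigma\simeq \mmm^G_\Lambda\simeq \overline{\nnn}^G_\Lambda$ assembled from the preceding results. Your extra verifications (connectedness of the sheaf of $G$-bundles, compatibility of the two orbi-space structures, and the matching of exceptional unstable surfaces) are sound elaborations of points the paper only addresses in the discussion preceding the proposition.
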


\begin{proof}
This result follows from a concatenation of homotopy equivalences:
\[
\mathpzc{D}^G_\Lambda\simeq \mo(D_{g,n,I})\simeq \MM_\varsigma\simeq \mmm^G_\Lambda\simeq \overline{\nnn}^G_\Lambda \qedhere
\]
\end{proof}

Let $\CC^{\text{cell}}$ be a functor taking finite cell complexes $\comp_{\K}$ (Apendix A \cite{Costello07}). Applying $\CC^{\text{cell}}$ to $\mathpzc{D}^G_\Lambda(\alpha,\beta)$ one gets the following category and abuse of notation
$$\mathpzc{D}^G_{\Lambda}(\alpha,\beta):=\CC^{\text{cell}}\left(\mathpzc{D}^G_{\Lambda}(\alpha,\beta)\right)$$ 
which, due to the quasi-isomorphism $\CC^{\text{\scriptsize cell}}(X)\to \CC(X)$ (where $X$ is an orbi-cell complex, see Apendix A \cite{Costello07}), leads to the following result, which is the equivariant analogue of Lemma 6.1.7 \cite{Costello07}:

\begin{proposition} \label{quasiisos}
There is a quasi-isomorphism of DGSM categories: $\mathpzc{D}^G_{\Lambda}\cong \goo^G_\Lambda$.
\end{proposition}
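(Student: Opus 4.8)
The plan is to assemble the quasi-isomorphism from the orbi-space homotopy equivalences already established, together with the comparison between cellular and singular chains; this is the equivariant transcription of Costello's argument for Lemma 6.1.7. Both $\mathpzc{D}^G_\Lambda$ and $\goo^G_\Lambda$ have the same objects, namely the triples $([O],s,t)$, so I would construct a functor that is the identity on objects and a quasi-isomorphism on each Hom-complex. First I would invoke the fact that the singular chains functor $\CC$ sends homotopy equivalences of (orbi-)spaces to quasi-isomorphisms of chain complexes. Applying $\CC$ to the chain of equivalences $\mathpzc{D}^G_\Lambda(\alpha,\beta)\simeq\overline{\nnn}^G_\Lambda(\alpha,\beta)\simeq\mmm^G_\Lambda(\alpha,\beta)$ supplied by the two preceding propositions yields, for every $\alpha,\beta$, a quasi-isomorphism $\CC(\mathpzc{D}^G_\Lambda(\alpha,\beta))\xrightarrow{\sim}\CC(\mmm^G_\Lambda(\alpha,\beta))=\homo_{\goo^G_\Lambda}(\alpha,\beta)$. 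That we work over a field $\K$ of characteristic zero is essential here: rationally the chains of an orbi-cell complex agree with those of its coarse realization, so the isotropy groups contribute nothing to homology and the orbi-space equivalences descend to honest quasi-isomorphisms.

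Next I would pass from the cellular model to the singular one. Since the Hom-complex of $\mathpzc{D}^G_\Lambda$ is by definition $\CC^{\text{cell}}(\mathpzc{D}^G_\Lambda(\alpha,\beta))$, the natural quasi-isomorphism $\CC^{\text{cell}}(X)\to\CC(X)$ recalled above specializes to a quasi-isomorphism $\mathpzc{D}^G_\Lambda(\alpha,\beta)\to\CC(\mathpzc{D}^G_\Lambda(\alpha,\beta))$; composing it with the map of the previous step gives the sought Hom-wise quasi-isomorphism $\mathpzc{D}^G_\Lambda(\alpha,\beta)\xrightarrow{\sim}\homo_{\goo^G_\Lambda}(\alpha,\beta)$.

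The remaining and genuinely delicate point is to upgrade this family of Hom-wise quasi-isomorphisms into a map of DGSM categories, i.e. to check compatibility with identities, with the composition given by gluing surfaces and their $G$-bundles, and with the disjoint-union symmetric monoidal structure. Monoidal compatibility is the mild part: the chains functors are lax monoidal through the Eilenberg--Zilber map, which is a quasi-isomorphism, so the resulting functor is h-split in the sense defined above. I expect the main obstacle to be naturality with respect to gluing. All the intermediate equivalences ultimately originate from the modular-operadic comparison of \thmref{jeffrey}, between the derived modular envelope of $\DD_\varsigma$ and $\MM_\varsigma$, and there they are compatible with the gluing (composition) maps only up to coherent homotopy. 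The work therefore consists in choosing the orbi-cell decompositions of the spaces $\mathpzc{D}^G_\Lambda(\alpha,\beta)$ so that boundary gluing is realized by cellular maps, and in checking that the comparison $\CC^{\text{cell}}\to\CC$ is natural for these gluing maps, so that the coherent homotopies rigidify into strict compatibility on the nose---exactly the step where the matching cellular structure built in \cite{Costello07, thramses15} and the positive-boundary condition are used.
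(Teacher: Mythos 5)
Your argument coincides with the paper's: the proposition is stated there as an immediate consequence of the quasi-isomorphism $\CC^{\text{cell}}(X)\to\CC(X)$ from Appendix A of \cite{Costello07} combined with the chain of orbi-space homotopy equivalences $\mathpzc{D}^G_\Lambda\simeq\overline{\nnn}^G_\Lambda\simeq\mmm^G_\Lambda$ established in the two preceding propositions, which is exactly your first two steps. Your final paragraph, on rigidifying the Hom-wise quasi-isomorphisms into a strict DGSM functor compatible with gluing and the monoidal structure, is a sound elaboration of precisely the point the paper leaves implicit by deferring to Costello's Lemma 6.1.7 and its cellular structures.
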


\subsection{Calabi-Yau equivariant $A_\infty$-categories} 

\begin{remark}
Henceforth we will assume that the field $\K$ has characteristic zero and that, for a discrete group $G$, it is equipped with the identity map as group action: $G\times \K\stackrel{\id}{\longrightarrow} \K$ with $(g,k)\mapsto k$. 
\end{remark}

Let $\gcc$ be a monoidal $\K$-category 
. An automorphism of $\gcc$ is an invertible $\K$-linear (on the morphisms) functor $\FF:\gcc\to\gcc$ such that:
\begin{enumerate}
  \item $\FF(1)=1$; 
  \item $\FF(c_1\otimes c_2)=\FF(c_1)\otimes \FF(c_2)$;
  \item $\FF(f\otimes g)=\FF(f)\otimes \FF(g)$ for arrows $f,g$ in $\gcc$;
  \item There is compatibility with the morphisms defining the monoidal structure of $\gcc$.
\end{enumerate}

The group of automorphisms of $\gcc$ will be denoted by $\aut(\gcc)$. For our purpose we need to require the elements of $\aut(\gcc)$ to be the identity on the objects.
\\\\
Let $\gcc$ be a differential graded monoidal $\K$-category equipped with a morphism $G\stackrel{\varphi}{\longrightarrow}\aut(\gcc)$ pairing an element $\alpha\in G$ with an automorphism of $\gcc$, $\varphi_\alpha:\gcc\to\gcc$. For a given morphism $f:c_1\to c_2$ in $\homo_{\gcc}$, we define the map $\varphi_\alpha(f):\varphi_\alpha(c_1)\to\varphi_\alpha(c_2)$. Such a category will be called \index{Equivariant category} \textit{equivariant category}. A functor $\EE:(\mathpzc{C}_1,\varphi)\to (\mathpzc{C}_2,\varphi')$ between equivariant categories is a functor of the underlying DG categories such that $\varphi'_\alpha\circ \EE=\EE\circ \varphi_\alpha$ for each $\alpha\in G$.


\begin{lemma}[pg. 7 \cite{Turaev00}]
For maps $f:c_1\to c_2,\, g:c_2\to c_3$ in $\homo_{\gcc}$ and $\alpha, \beta\in G$, the following identities hold:
\begin{enumerate}
  \item $\varphi_\alpha(f\circ g)=\varphi_\alpha(f)\circ \varphi_\alpha(g)$;
  \item $\varphi_\alpha(f\otimes g)=\varphi_\alpha (f)\otimes\varphi_\alpha(g)$;
  \item $\varphi_\alpha(\id_{c_1})=\id_{\varphi_\alpha(c_1)}$;
  \item $\varphi_{\alpha\beta}(f)=\varphi_\alpha(\varphi_\beta(f))$;
  \item $\varphi_1(f)=\varphi_\alpha(\varphi_{\alpha^{-1}}(f))=\varphi_{\alpha^{-1}}(\varphi_\alpha(f))=f$;
\end{enumerate}
\end{lemma}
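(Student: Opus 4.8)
The plan is to recognize that all five identities are formal consequences of the two pieces of structure packaged into the definition of an equivariant category $(\gcc,\varphi)$: first, that each $\varphi_\alpha$ is an \emph{automorphism} of $\gcc$ in the sense defined above, hence an invertible $\K$-linear functor compatible with $\otimes$ and equal to the identity on objects; and second, that $\varphi\colon G\to\aut(\gcc)$ is a group homomorphism. I would therefore split the list into the functoriality identities (1)--(3), which use only the first fact, and the action identities (4)--(5), which use only the second.

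For (1) I would invoke functoriality of $\varphi_\alpha$ directly: being a functor, $\varphi_\alpha$ preserves composition, so $\varphi_\alpha(f\circ g)=\varphi_\alpha(f)\circ\varphi_\alpha(g)$ (the two images are composable because $\varphi_\alpha$ preserves sources and targets, and indeed $\varphi_\alpha(c_2)=c_2$ since $\varphi_\alpha$ fixes objects). Identity (2) is literally axiom (3) in the definition of an automorphism, namely $\FF(f\otimes g)=\FF(f)\otimes\FF(g)$ applied to $\FF=\varphi_\alpha$. For (3) I would combine the fact that a functor preserves identity morphisms with the requirement that $\varphi_\alpha$ is the identity on objects, so that $\varphi_\alpha(\id_{c_1})=\id_{\varphi_\alpha(c_1)}=\id_{c_1}$.

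For (4), the identity $\varphi_{\alpha\beta}(f)=\varphi_\alpha(\varphi_\beta(f))$ is exactly the statement that $\varphi$ is a homomorphism, i.e.\ $\varphi_{\alpha\beta}=\varphi_\alpha\circ\varphi_\beta$ in $\aut(\gcc)$, evaluated at $f$. For (5) I would specialize (4): taking $\beta=\alpha^{-1}$ gives $\varphi_\alpha(\varphi_{\alpha^{-1}}(f))=\varphi_{\alpha\alpha^{-1}}(f)=\varphi_1(f)$, and the product taken in the other order gives $\varphi_{\alpha^{-1}}(\varphi_\alpha(f))=\varphi_1(f)$; it then remains to observe that $\varphi_1=\id_{\gcc}$, which holds because a group homomorphism sends the unit $1\in G$ to the unit of $\aut(\gcc)$, namely the identity automorphism.

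Since the lemma is essentially an unwinding of definitions, I do not expect a genuine obstacle; the only care required is bookkeeping of conventions. Specifically, the ordering in (4) fixes that $\varphi$ is a homomorphism rather than an anti-homomorphism, and the notation $f\circ g$ for $f\colon c_1\to c_2$, $g\colon c_2\to c_3$ records a diagrammatic (left-to-right) composition convention; once these are fixed, each line is a one-step application of an axiom. The single point that deserves an explicit sentence is the claim $\varphi_1=\id_{\gcc}$ used in (5), since it is the only place where the \emph{unitality} of the homomorphism $\varphi$ (as opposed to mere multiplicativity) is invoked.
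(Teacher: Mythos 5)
Your proof is correct: each identity is indeed a one-step consequence of the definitions, with (1)--(3) following from $\varphi_\alpha$ being an automorphism (a functor compatible with $\otimes$ and fixing objects) and (4)--(5) from $\varphi\colon G\to\aut(\gcc)$ being a group homomorphism, including the unitality point $\varphi_1=\id_{\gcc}$ that you rightly flag. The paper itself gives no proof, citing Turaev instead, and your definitional unwinding is exactly the standard argument that citation stands in for.
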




An \index{Equivariant $A_\infty$-category} \textit{equivariant $A_\infty$-category} is an equivariant category $\gcc$ where, for $c_1, c_2\in\ob(\gcc)$, the space $\homo_{\gcc}(c_1,c_2)$ is a finite-dimensional complex of $\K$-modules. Further, for each sequence of objects $c_0,\dots, c_n\in\ob(\gcc)$ with $n\geq 2$, there are cyclically symmetric maps
$$m_n:\homo_{\gcc}(c_0,c_1)\otimes\cdots\otimes\homo_{\gcc}(c_{n-1},c_n)\to\homo_{\gcc}(c_0,c_n)$$ 
of degree $n-2$ satisfying the usual conditions we find in \cite{Costello07}, Section 7.1, and the following equality:
$$\varphi_\alpha(m_n(f_0\otimes\cdots\otimes f_{n-1}))=m_n(\varphi_\alpha(f_0)\otimes\cdots\otimes \varphi_\alpha(f_{n-1})).$$

If for each $c\in\ob(\mathpzc{C})$ there exists an element $1_c\in\homo_{\mathpzc{C}}(c,c)$ of degree zero such that
\begin{enumerate}
\item $b_2(f\otimes 1_c)=f$ and $b_2(1_c\otimes g)=g$ for $f\in \homo_{\mathpzc{C}}(c',c)$ and $g\in \homo_{\mathpzc{C}}(c,c')$; 
\item for $0\leq i\leq n$, if $f_i\in\homo_{\mathpzc{C}}(c_i,c_{i+1})$ and $j=j+1$, then
$$b_n(f_0\otimes f_1\otimes\cdots\otimes 1_{c_j}\otimes\cdots\otimes f_{n-1})=0$$
\end{enumerate}
we say that the $A_\infty$-category $\mathpzc{C}$ is \textit{unital}.
\\\\
A functor $\EE:(\mathpzc{C}_1,\varphi)\to (\mathpzc{C}_2,\varphi')$ between equivariant $A_\infty$-categories is a functor of the underlying $A_\infty$-categories (see Section 5.1.2 \cite{Lefevre03}) such that $\varphi'_\alpha\circ \EE=\EE\circ \varphi_\alpha$ for each $\alpha\in G$. Equivariant $A_\infty$-categories and functors between them form a category.
\\\\
A \index{Calabi-Yau equivariant $A_\infty$-category} \textit{Calabi-Yau equivariant $A_\infty$-category} is an equivariant $A_\infty$-category $\gcc$ endowed with a trace map $\tr:\homo_{\mathpzc{C}}(c_1,c_1)\to \K$ compatible with the group action and a symmetric and non-degenerate pairing $\langle -,-\rangle_{c_1,c_2}$ for objects $c_1, c_2\in\ob(\gcc)$: 
\[
\left.\begin{array}{cccc}\langle -,-\rangle_{c_1,c_2}: & \homo_{\gcc}(c_1,c_2)\otimes \homo_{\gcc}(c_2,c_1) & \to & \K \\ & f\otimes g & \mapsto & \tr(g\circ f)\end{array}\right.
\]
which is required to be symmetric, non-degenerate and must satisfy:
\begin{enumerate}
\item $\langle m_{n-1}(c_0\otimes \cdots \otimes c_{n-2}), c_{n-1}\rangle=
(-1)^{(n+1)+|c_0|\sum_{i=1}^{n-1}|c_i|}\langle m_{n-1}(c_1\otimes \cdots \otimes c_{n-1}), c_0 \rangle;$
\item $\langle \varphi_\alpha(f),\varphi_\alpha(g)\rangle_{\varphi_\alpha(c_1),\varphi_\alpha(c_2)}=\langle f,g\rangle_{c_1,c_2}.$
\end{enumerate}

\subsection{Generators and relations}

The techniques in \cite{Costello07} apply in the equivariant setting as we will not need to include any further generator, as required in \cite{Ramses15}. Observe that the disc with one incoming and one outgoing boundary component, which is equipped with a principal $G$-bundle and a trivialization being the identity on one of the boundary components and the action by $g\in G$ on the other, will play the role of the group action when describing Calabi-Yau equivariant $A_\infty$-categories. 
\\\\
Let $\mathpzc{D}^+_{\Lambda, G}\subset \mathpzc{D}^G_{\Lambda}$ be the subcategory with the same objects but where a morphism is given by a disjoint union of discs endowed with a $G$-bundle and a trivialization on the boundaries, with each connected component having exactly one outgoing boundary marked point. For an ordered set of D-branes, with $n\geq 1$, let $[\lambda_n]:=\{\lambda_0, \dots, \lambda_{n-1}\}$ be the object in $\ob\left(\goo^G_\Lambda\right)$ with $O=n$, $s(i)=\lambda_i, t(i)=\lambda_{i+1}$ for $0\leq i\leq O-1$; we use the notation $[\lambda_n]^c:=\{\lambda_1, \dots, \lambda_{n-1},\lambda_0\}$. Let us define $(D^+(\lambda_0,\dots,\lambda_{n-1}), \varphi)$ a disc in $\mathpzc{D}^+_{\Lambda, G}$ with $n$ marked points, with D-brane labelling given by the different $\lambda_i$ and a principal $G$-bundle with a trivialization $\varphi=\{\varphi_i\}_{i\in\{0,\dots, n-1\}}$ on each marked point. All the boundary marked points are incoming except for that between $\lambda_{n-1}$ and $\lambda_0$, which is outgoing. The boundary components of the discs are compatibly oriented. 
\\\\
Let $\gcc\subset \mathpzc{D}^G_{\Lambda}$ be the subcategory with $\ob(\gcc)=\ob\left(\mathpzc{D}^G_{\Lambda}\right)$ but whose arrows are not allowed to have connected components which are the disc with at most one open marked point, or the disc with two open marked incoming points or the annulus with neither open nor closed marked points. The morphisms in $\gcc$ are assumed to be not complexes but graded vector spaces.
\\\\
Let us consider a principal $G$-bundle $\xi=(D\times G, \pi, D)$ over a disc $D$. It is known that the automorphisms $\aut(D\times G)$ are defined by an element $g\in G$. To be precise:

\begin{theorem}[Theorem 1.1, Chapter 5 \cite{Husemoller93}]
Let us consider a principal $G$-bundle of the form $\xi=(B\times G, \pi, B)$. Then the automorphisms $\xi \to \xi$ over $B$ are in bijection with maps of the form $B\to G$. Particularly, an automorphism $\xi\to \xi$ has the form $h_g(b,s)=(b, g(b)s)$ for a certain map $g:B\to G$.
\end{theorem}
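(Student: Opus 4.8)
The plan is to exploit the two defining properties of a bundle automorphism over $B$: that it commutes with the projection $\pi$ and that it is $G$-equivariant. First I would observe that any automorphism $h:\xi\to\xi$ over $B$ covers the identity of $B$, so $\pi\circ h=\pi$. Writing a point of the total space as a pair $(b,s)$ with $b\in B$ and $s\in G$, this forces the first coordinate to be preserved, whence $h(b,s)=(b,\psi(b,s))$ for some continuous map $\psi:B\times G\to G$.

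Next I would use equivariance. Since the right $G$-action on the trivial bundle is $(b,s)\cdot t=(b,st)$, the condition $h((b,s)\cdot t)=h(b,s)\cdot t$ reads $\psi(b,st)=\psi(b,s)\,t$ for all $s,t\in G$. Specialising to $s=e$, the unit of $G$, and setting $g(b):=\psi(b,e)$, this identity yields $\psi(b,t)=g(b)\,t$, and therefore $h(b,s)=(b,g(b)s)=h_g(b,s)$. The map $g:B\to G$ so obtained is continuous, being the composite of the restriction of $h$ to $B\times\{e\}$ with the projection onto the second factor.

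Conversely, given any continuous $g:B\to G$, I would verify directly that $h_g(b,s):=(b,g(b)s)$ defines an automorphism over $B$: it covers the identity, it is equivariant since $h_g(b,st)=(b,g(b)st)=h_g(b,s)\cdot t$, and it is invertible with inverse $h_{g^{-1}}$, where $g^{-1}(b):=g(b)^{-1}$, because $h_{g^{-1}}\circ h_g(b,s)=(b,g(b)^{-1}g(b)s)=(b,s)$ and symmetrically on the other side. The assignments $h\mapsto g$ and $g\mapsto h_g$ are then visibly mutually inverse, which gives the claimed bijection between the automorphisms of $\xi$ over $B$ and maps $B\to G$.

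The only genuinely delicate point is a bookkeeping one about continuity: one must confirm that the map $g$ extracted from an automorphism is continuous, which follows from evaluating $h$ along $B\times\{e\}$, and in the converse direction that $h_g$ together with its candidate inverse is continuous, which follows from continuity of $g$, of inversion in $G$, and of the $G$-action. Everything else is a formal consequence of equivariance, so I expect the main effort to lie in stating these verifications cleanly rather than in surmounting any real obstacle.
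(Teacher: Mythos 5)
Your proof is correct and is essentially the standard argument given in the cited source (the paper itself states this result without proof, quoting Husemoller): decompose the automorphism via $\pi\circ h=\pi$, use right $G$-equivariance to reduce $\psi(b,s)$ to left multiplication by $g(b):=\psi(b,e)$, and verify the converse directly. The continuity bookkeeping you flag is handled exactly as you describe, and is in any case automatic in the paper's setting since $G$ is assumed discrete.
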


If we now consider two trivializations of $\xi$, say $\varphi_1$ and $\varphi_2$, we have:
$$\varphi_2(b,s)=h_g(\pi(b,s))\varphi_1(b,s),$$ 
what can be summarized by saying that any two trivializations over $D$ differ from an element of the group $G$. Clearly, a trivialization $\psi$ is defined by an element of the group $G$.
\\\\
We are interested in pairs of the form $(\xi,\varphi)$ where $\xi$ is a principal $G$-bundle over a disc and $\varphi$ is a trivialization over the boundary, what means that $\varphi$ is given by a collection $\{\varphi_1,\dots, \varphi_n\}$ with one trivialization $\varphi_i$ for each boundary component of the given disc. Observe that, as each trivialization $\varphi$ is given by an element of the group $G$, such a trivialization is determined by an $n$-tuple $\{g_1,\dots, g_n\}$ with $g_i\in G$ for $i\in\{1,\dots, n\}$. Two pairs $(\xi,\varphi)$ and $(\xi',\varphi')$ are isomorphic if there is an element $h\in G$ such that $(g'_1,\dots, g'_n)=(hg_1,\dots, hg_n)$, where $\{g'_1,\dots, g'_n\}$ is the $n$-tuple associated to $\varphi'$. Let us denote by $\overline{(\xi,\varphi)}$ the isomorphism class of $(\xi,\varphi)$.
\\
We define a \index{$G$-twisted disc} \textit{$G$-twisted disc} as a pair $\overline{(D^\mathfrak{g}(\lambda_0,\lambda_1), \varphi)}$ given by a disc $D(\lambda_0,\lambda_1)$ with two marked points endowed with a principal $G$-bundle $\xi$ and trivializations $\varphi=\{\varphi_1, g\varphi_1\}$ for $g\neq e$, the neutral element of $G$. A disc $\overline{(D(\lambda_0,\dots,\lambda_n),\varphi)}$ is \index{Untwisted disc} \textit{untwisted} if all the trivializations $\varphi_i$ agree (for $i\in\{0,\dots, n\}$), that is: if they differ by the identity $e\in G$.

\begin{proposition}[cf. Proposition 6.2.1 \cite{Costello07}]\label{generators}
Let $\overline{(D(\lambda_0,\dots, \lambda_{n-1}),\varphi)}$ be the isomorphism class of an untwisted disc 
with the marked points all incoming. The subcategory $\gcc$ is freely generated, as a symmetric monoidal category over $\ob\left(\mathpzc{D}^G_{\Lambda}\right)$, by $\overline{(D(\lambda_0,\dots, \lambda_{n-1}),\varphi)}$, for $n\geq 3$ and the discs $\overline{(D_{\text{out}}^\mathfrak{g}(\lambda_i,\lambda_j),\varphi)}$ with two outgoing marked points, subject to the relation that $\overline{(D(\lambda_0,\dots, \lambda_{n-1}),\varphi)}$ is cyclically symmetric: 
$$\overline{(D(\lambda_0,\dots, \lambda_{n-1}),\varphi)}=\pm \overline{(D(\lambda_1,\dots, \lambda_{n-1}, \lambda_0),\varphi')}.$$
\end{proposition}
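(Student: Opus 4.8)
The plan is to reduce the statement to Costello's non-equivariant result (Proposition 6.2.1 of \cite{Costello07}) and then to account separately for the extra data carried by the principal $G$-bundle and its boundary trivialization. First I would exploit the symmetric monoidal structure to reduce to connected morphisms: since disjoint union is the monoidal product and every morphism of $\gcc$ is by definition a disjoint union of discs, it suffices to show that each connected component—a single disc carrying a $G$-bundle with boundary trivializations—is obtained from the proposed generators by gluing (composition) and disjoint union. This is what "freely generated as a symmetric monoidal category over $\ob(\mathpzc{D}^G_{\Lambda})$" amounts to once the objects are fixed.

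For a single disc, forgetting the bundle data places us in exactly the situation of \cite{Costello07}: the stratification of the moduli space of stable discs with marked points shows that gluing along boundary marked points realizes composition, and that a disc with $n\geq 3$ marked points is indecomposable with respect to this operation. The content of Proposition 6.2.1 of \cite{Costello07} is that such discs freely generate the underlying subcategory subject only to cyclic symmetry, and I would invoke this verbatim for the underlying surfaces, the exceptional unstable discs being precisely those excluded from $\gcc$.

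The genuinely equivariant step is to show that the bundle-and-trivialization data is generated by the twisted discs $\overline{(D^{\mathfrak{g}}_{\text{out}}(\lambda_i,\lambda_j),\varphi)}$. By the theorem of Husemoller quoted above, every $G$-bundle over a disc is trivial and any trivialization is determined by an element of $G$; consequently the isomorphism class of a disc with $n$ marked points and boundary trivializations is recorded by an $n$-tuple $(g_1,\dots,g_n)\in G^n$ modulo the diagonal left $G$-action. I would then observe that gluing a twisted operator disc $\overline{(D^{\mathfrak{g}}_{\text{out}}(\lambda_i,\lambda_j),\varphi)}$ onto a boundary marked point precisely multiplies the corresponding entry of this tuple by $g$—this is forced by the description of composition via gluing of bundles together with the requirement that trivializations agree along the glued interval. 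Hence, starting from the untwisted disc, where all entries coincide, and gluing suitable twisted operator discs, one reaches every isomorphism class $\overline{(D(\lambda_0,\dots,\lambda_{n-1}),\varphi)}$; this is the geometric counterpart of the earlier remark that the two-marked-point disc "plays the role of the group action."

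Finally I would verify that the generators satisfy no relation beyond cyclic symmetry. The cyclic symmetry relation itself arises from rotating the disc, which is an isomorphism of collared surfaces inducing the stated cyclic permutation of labels, with a Koszul sign governed by the degrees and hence the $\pm$; because the morphisms of $\gcc$ are taken to be graded vector spaces rather than chain complexes, no further homotopical relations intervene. The main obstacle is the equivariant bookkeeping of the third paragraph: one must check carefully that gluing twisted operator discs faithfully encodes the diagonal quotient of $G^n$, introducing neither redundancy—which would impose spurious relations—nor a gap that misses some isomorphism class, and that this encoding is compatible with the cyclic symmetrization and with the orientation conventions on the boundary components.
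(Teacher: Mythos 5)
Your overall route---reduce to Costello's Proposition 6.2.1 for the underlying marked discs, then handle the bundle data via Husemoller's classification of boundary trivializations by tuples in $G^n$ modulo the diagonal left $G$-action---is exactly the route the paper intends (it offers no written proof of this proposition, only the ``cf.'' to \cite{Costello07} and the surrounding remarks), and your first, second and fourth paragraphs are fine as far as they go. The genuine problem is the third paragraph, the equivariant step, and it is precisely the point you yourself flagged as ``the main obstacle'': it fails as written. Gluing a disc $\overline{(D^{\mathfrak{g}}_{\text{out}}(\lambda_i,\lambda_j),\varphi)}$ onto a marked point does not merely multiply the corresponding entry of the tuple by $g$; it also converts that point from incoming to outgoing, because composition only joins outgoing points to incoming ones and \emph{both} points of this generator are outgoing. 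Consequently, in any composite of the stated generators, all the incoming marked points carry a common trivialization entry (up to the diagonal action): the untwisted all-incoming generators start with constant tuples, and the only way to alter an entry is to spend that point's incoming status. A count makes the obstruction concrete: a connected composite built from $a$ all-incoming discs (each with $n_i\geq 3$ points) and $b$ two-outgoing discs, glued along a tree, has $\sum n_i-(a+b-1)$ incoming and $b-a+1$ outgoing points; a one-in-one-out disc would force $a=b$ and $\sum n_i=2b<3a$, a contradiction. Hence the twisted one-in-one-out discs---the very morphisms the paper says ``play the role of the group action''---and, more generally, any smooth disc with two incoming points whose trivializations differ (e.g.\ the all-incoming $3$-disc with class $(e,e,g)$, $g\neq e$) are morphisms of $\gcc$ that your procedure never reaches, so ``one reaches every isomorphism class'' is false as argued.

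To close the gap you must enlarge the generating set: either read the generators $\overline{(D(\lambda_0,\dots,\lambda_{n-1}),\varphi)}$ with $\varphi$ an arbitrary (not necessarily untwisted) trivialization class, or add the twisted discs $\overline{(D^{\mathfrak{g}}(\lambda_i,\lambda_j),\varphi)}$ with one incoming and one outgoing point, which twist an entry in place without flipping its orientation. The latter is what the paper itself does one step later: in Corollary \ref{Mess} the category $\mathpzc{D}^+_{\Lambda, G}$ is generated by the untwisted discs \emph{together with} the one-in-one-out twisted discs $\overline{(D^{\mathfrak{g}}(\lambda_i,\lambda_{i+1}),\varphi)}$, with relation (1) there governing how the action discs interact with the products. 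Note that adding these generators also changes the relation list beyond cyclic symmetry (at least the relation that composing two action discs with twists $g$ and $h$ yields the action disc with twist $hg$), so the ``freeness'' half of your final paragraph would need to be rechecked against the enlarged presentation. In short: your plan is the intended one, but the equivariant bookkeeping, which you correctly identified as the crux, is where the argument as executed breaks.
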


\begin{remark}
Mind the fact that we need to permute the components of $\varphi$ also in the previous Proposition to get the trivialization $\varphi'$.
\end{remark}



\begin{corollary}[cf. Lemma 6.2.2 \cite{Costello07}] \label{Mess}
The category $\mathpzc{D}^+_{\Lambda, G}$ is freely generated as a DGSM category over $\ob(\mathpzc{D}_{\Lambda,G})$ by the untwisted discs $\overline{(D^+(\lambda_0,\dots,\lambda_{n-1}),\varphi)}$ and the discs $\overline{(D^\mathfrak{g}(\lambda_i,\lambda_{i+1}), \varphi)}$ modulo the following relations: 
\begin{enumerate}

\item for $n\geq 2$ and $0\leq i\leq n-2$, gluing pairs $\overline{(D^\mathfrak{g}(\lambda_i,\lambda_{i+1}),\{\psi_i,\psi_{i+1}\})}$ to the incoming boundary components of $\overline{(D^+(\lambda_0,\dots, \lambda_{n-1}),\varphi)}$ is equivalent to gluing discs $\overline{(D^+(\lambda_0,\lambda_{n-1}),\{\psi_0,\psi_{n-1}\})}$ to the only outgoing boundary component of the disc $\overline{(D^+(\lambda_0,\dots, \lambda_{n-1}),\varphi)}$; \label{prodequiv}
\begin{figure}[h]
\centering
\scalebox{.65}{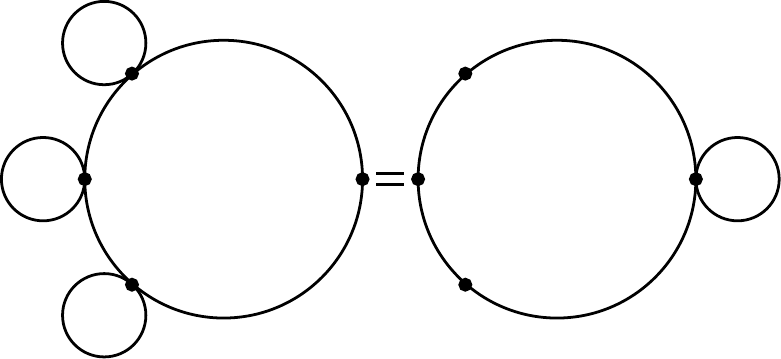}
\end{figure}

\item for $n=3$ we have the following relation: 
$$\overline{(D^+(\lambda_0,\lambda_0,\lambda_1),\varphi)}\circ \overline{(D^+(\lambda_0),\psi)}=\id_{\{\lambda_0,\lambda_1\}}=\overline{(D^+(\lambda_0,\lambda_1,\lambda_1),\xi)}\circ \overline{(D^+(\lambda_1), \zeta)};$$


\item for $n\geq 4:\, \overline{(D^+(\lambda_0,\dots,\lambda_i,\lambda_i,\dots, \lambda_{n-1}),\varphi)}\circ \overline{(D^+(\lambda_i),\psi)}=0$.
\end{enumerate}
\end{corollary}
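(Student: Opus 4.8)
The plan is to derive the presentation from the free generation of the auxiliary subcategory $\gcc$ established in \propref{generators}, carefully tracking the role played by the $G$-bundle trivializations throughout. First I would recall that, by \propref{generators}, the subcategory $\gcc\subset\mathpzc{D}^G_\Lambda$ is freely generated as a symmetric monoidal category over $\ob(\mathpzc{D}^G_\Lambda)$ by the all-incoming untwisted discs $\overline{(D(\lambda_0,\dots,\lambda_{n-1}),\varphi)}$ for $n\geq 3$ together with the two-outgoing twisted discs $\overline{(D^{\mathfrak{g}}_{\text{out}}(\lambda_i,\lambda_j),\varphi)}$, subject only to cyclic symmetry. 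Passing from $\gcc$ to $\mathpzc{D}^+_{\Lambda,G}$ amounts to two modifications: reinstating the low-valence discs (in particular the single-marked-point discs) that were excluded from $\gcc$ but are permitted in $\mathpzc{D}^+_{\Lambda,G}$, and switching from the ``all inputs'' convention to the ``one output per component'' convention that defines $\mathpzc{D}^+_{\Lambda,G}$.

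The generation statement I would establish as follows. A two-outgoing disc functions as a copairing, so gluing one to a leg of an all-incoming disc converts that leg into an outgoing marked point; in this way the one-outgoing untwisted discs $\overline{(D^+(\lambda_0,\dots,\lambda_{n-1}),\varphi)}$ and the one-in/one-out twisted discs $\overline{(D^{\mathfrak{g}}(\lambda_i,\lambda_{i+1}),\varphi)}$ are recovered from the generators of $\gcc$, and conversely the $\gcc$-generators decompose into the proposed ones. Using the isomorphism-class analysis preceding the statement --- each boundary trivialization is recorded by an element of $G$, and two trivializations differ by a global element of $G$ --- every disc may be normalized so that one distinguished trivialization is the identity, the remaining twists being absorbed into twisted discs glued along the boundary, each such disc implementing the automorphism $\varphi_g$ for the relevant $g\in G$. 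This exhibits the proposed set as generating.

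It then remains to identify exactly the three relations. I would argue that the cyclic symmetry of \propref{generators}, once transported across the capping copairing, becomes precisely relation \ref{prodequiv}: gluing twisted discs onto the incoming boundaries of a $D^+$ disc is interchangeable with gluing a single twist onto its unique outgoing boundary, which is nothing but the $G$-equivariance of the generator. Relations 2 and 3 are the equivariant transcriptions of the unit and degeneracy relations of \cite[Lemma~6.2.2]{Costello07}: the single-point disc $\overline{(D^+(\lambda_0),\psi)}$ is a two-sided unit for the gluing of a three-point disc, yielding relation 2, while for $n\geq 4$ inserting such a unit at an interior repeated-label position produces an unstable configuration whose associated cellular chain vanishes, yielding relation 3. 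Freeness of the presentation then follows formally from the freeness in \propref{generators}, since no further generator is needed in the equivariant setting.

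The step I expect to be the main obstacle is the bookkeeping in relation \ref{prodequiv}. One must verify that transporting a twist from an incoming boundary, across the disc, to the outgoing boundary is an identity at the level of isomorphism classes of $(\text{$G$-bundle},\text{trivialization})$ pairs, and that the induced permutation $\varphi\mapsto\varphi'$ of the trivialization tuple flagged in the \emph{Remark} after \propref{generators} is compatible both with the cyclic symmetry and with the signs coming from the differential graded structure. Once this compatibility is confirmed, the remaining verifications reduce to Costello's non-equivariant argument applied componentwise.
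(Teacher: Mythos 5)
Your proposal follows essentially the same route as the paper, which states this corollary without an explicit proof and, exactly as you intend, derives it from \propref{generators} together with the argument of Lemma~6.2.2 of \cite{Costello07}: the twisted discs are the only new generators, relations 2 and 3 are the unit and degeneracy relations carried over componentwise, and the isomorphism-class analysis of pairs $(\xi,\varphi)$ supplies relation~\ref{prodequiv}. One small correction: relation~\ref{prodequiv} does not arise from cyclic symmetry transported across the capping copairing, but purely from the global ambiguity $(g_1,\dots,g_n)\sim(hg_1,\dots,hg_n)$ in trivialization tuples --- a simultaneous twist of all incoming boundaries is isomorphic to an inverse twist of the single outgoing one --- which you in any case correctly single out as the crucial verification in your closing paragraph.
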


\begin{theorem}[cf. Theorem 6.2.3 \cite{Costello07}] \label{theorem_Mess}
The category $\mathpzc{D}^G_{\Lambda}$ is freely generated, as a DGSM category over $\ob(\mathpzc{D}^G_{\Lambda})$, by $\mathpzc{D}^+_{\Lambda, G}$ and the discs $\overline{(D^\mathfrak{g}_{in}(\lambda_0,\lambda_1],\varphi)}$ with two incoming or $\overline{(D^\mathfrak{g}_{out}(\lambda_0,\lambda_1),\psi)}$ with two outgoing boundary components modulo the following relations:
\begin{enumerate}
  \item Gluing a disc with two outgoing boundary components to a disc with two incoming boundary components yields the identity;
  \item the untwisted disc $\overline{(D(\lambda_0,\dots,\lambda_{n-1}),\varphi)}$ is cyclically symmetric under the existing permutation isomorphism $[\lambda_n]\cong[\lambda_n]^c.$
\end{enumerate}
\end{theorem}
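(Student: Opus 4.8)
The plan is to prove the statement by exhibiting an isomorphism of DGSM categories between $\mathpzc{D}^G_{\Lambda}$ and the free DGSM category $\mathcal{F}$ built, over the fixed object class $\ob(\mathpzc{D}^G_{\Lambda})$, from the subcategory $\mathpzc{D}^+_{\Lambda, G}$ and the two families of duality discs $\overline{(D^\mathfrak{g}_{in}(\lambda_0,\lambda_1),\varphi)}$ and $\overline{(D^\mathfrak{g}_{out}(\lambda_0,\lambda_1),\psi)}$, imposing relations (1) and (2). First I would construct a DGSM functor $\Phi\colon\mathcal{F}\to\mathpzc{D}^G_{\Lambda}$. Since $\mathpzc{D}^+_{\Lambda, G}$ is by definition a subcategory of $\mathpzc{D}^G_{\Lambda}$ and the duality discs are genuine morphisms there, $\Phi$ is forced on generators; to see that it descends to the quotient one checks that the two imposed relations already hold in $\mathpzc{D}^G_{\Lambda}$. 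Relation (1) is the geometric \emph{snake} identity: gluing an outgoing pair to an incoming pair produces a strip, which is diffeomorphic rel boundary to the identity cylinder on a single marked interval, so the glued disc represents $\id$. Relation (2) is the statement that relabelling the boundary marked points of a disc by the cyclic permutation $[\lambda_n]\cong[\lambda_n]^c$ is induced by a diffeomorphism of the underlying collared surface, hence an identification inside the moduli space; the sign and the simultaneous permutation of the trivialization components $\varphi\mapsto\varphi'$ are exactly those recorded after \propref{generators}.

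The second and main step is surjectivity of $\Phi$ on morphism complexes, which is the geometric decomposition of an arbitrary disc. Working one connected component at a time (the monoidal structure reducing to this case), consider a disc $D$ with $p$ incoming and $q$ outgoing boundary marked points, cyclically arranged. If $q\geq 1$ I would realise $D$ by starting from a positive disc in $\mathpzc{D}^+_{\Lambda, G}$ with the $p$ original inputs together with $q-1$ auxiliary inputs carrying the \emph{dual} D-brane labels of the desired outputs, single outgoing point the chosen output, and then gluing a coevaluation disc $\overline{(D^\mathfrak{g}_{out},\psi)}$ to each of the $q-1$ auxiliary inputs: each such gluing matches one outgoing end of the cup to an incoming point, converting it into a free outgoing point and thereby producing the remaining outputs with their correct labels. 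The all-incoming case $q=0$ is then obtained by capping the single output of the resulting disc with an evaluation disc $\overline{(D^\mathfrak{g}_{in},\varphi)}$. Applying \corref{Mess} to the positive factor expresses it in turn through the untwisted discs $\overline{(D^+(\lambda_0,\dots,\lambda_{n-1}),\varphi)}$ and the twisted discs $\overline{(D^\mathfrak{g}(\lambda_i,\lambda_{i+1}),\varphi)}$, so every disc lies in the image of $\Phi$.

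It remains to show $\Phi$ is injective, i.e. that relations (1), (2) and the relations already present in $\mathpzc{D}^+_{\Lambda, G}$ via \corref{Mess} are the only coincidences. For this I would set up a normal form for words in $\mathcal{F}$: using relation (1) to straighten every cup--cap pair and cyclic symmetry (2) to fix the rotational ambiguity, any composite can be pushed to a canonical shape consisting of a single $\mathpzc{D}^+_{\Lambda, G}$-morphism (itself in the normal form supplied by \corref{Mess}) with all coevaluation discs gathered on the outgoing side and all evaluation discs on the incoming side. I would then match these normal forms bijectively with the cells of the orbi-cell decomposition of $\mathpzc{D}^G_{\Lambda}$ underlying $\CC^{\text{cell}}$, so that the count of generators on each side agrees and no further relation can hold.

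The hard part will be the bookkeeping of the $G$-bundle data through these manipulations, which is precisely what is absent from Costello's non-equivariant argument. Concretely, each coevaluation and evaluation disc is a \emph{twisted} disc carrying a group element, and one must verify that the element introduced when a cup converts an input to an output is cancelled by that of the matching cap when relation (1) straightens the wire, and that the cyclic rotation in relation (2) permutes the trivialization tuple $\varphi$ consistently with the isomorphism relation $(g_1,\dots,g_n)\sim(hg_1,\dots,hg_n)$ on trivializations established before \propref{generators}. Checking that these group-theoretic constraints are exactly captured by relations (1) and (2) — and introduce no new identifications — is where the equivariant content of the theorem resides.
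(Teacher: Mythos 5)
Your plan is essentially the route the paper intends --- in fact the paper supplies no proof of this theorem at all: it is transplanted from Costello's Theorem 6.2.3 \cite{Costello07}, the only equivariant input being the classification, recorded before \propref{generators}, of boundary-trivialized principal $G$-bundles on a disc by tuples $(g_1,\dots,g_n)$ modulo simultaneous left translation, so that no generators beyond the twisted two-point discs are required. Your reconstruction (a functor from the free DGSM category, relations verified geometrically, surjectivity by converting marked points with cups and caps, freeness by matching normal forms against the orbi-cells underlying $\CC^{\text{cell}}$) is precisely Costello's mechanism, so the substance reduces to the equivariant bookkeeping you flag but defer; it does close, though with two refinements you should make explicit. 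First, relation (1) holds literally only for compatibly trivialized pairs: gluing $\overline{(D^{\mathfrak{g}}_{\mathrm{out}}(\lambda_0,\lambda_1),\psi)}$ to $\overline{(D^{\mathfrak{h}}_{\mathrm{in}}(\lambda_1,\lambda_0),\varphi)}$ along one point in general produces a one-incoming/one-outgoing disc with a residual twist, and this is not a missing relation because that twisted disc is already a morphism of $\mathpzc{D}^+_{\Lambda,G}$ --- it is exactly the disc the paper singles out as implementing the $G$-action --- so your normal form should absorb residual twists into the positive factor rather than insist that cup and cap twists cancel. Second, since the trivialization data on each component is discrete, the orbi-cell decomposition of $\mathpzc{D}^G_{\Lambda}$ is Costello's cell decomposition with cells additionally labelled by classes of tuples $(g_1,\dots,g_n)$ modulo overall left translation; this is what makes your bijective matching of normal forms to cells go through verbatim and shows that relations (1)--(2) introduce no identifications beyond the non-equivariant ones. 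One presentational caveat: morphisms of $\mathpzc{D}^G_{\Lambda}$ include nodal (tree-like) configurations and positive-degree chains, so the surjectivity argument should be run cell-by-cell rather than disc-by-disc, and, as the paper does for the subcategory $\gcc$, freeness is best established first at the level of graded morphism spaces, with the differential $d$ imposed afterwards.
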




Let $\mathpzc{D}^+_{\Lambda, G}$ be the $\ob(\goo^G_\Lambda)\mhyphen\mathpzc{D}^+_{\Lambda, G}$-bimodule with the generators and relations stated above.
\\\\
The complexes $\mathpzc{D}^+_{\Lambda,G}$ admit a differential $d$ which is defined as follows: if $*$ denotes the gluing of the open marked points between $\lambda_i$ and $\lambda_j$:
$$d((D(\lambda_0,\dots,\lambda_{n-1}),\varphi))=\sum_{\substack{0\leq i \leq j\leq n-1 \\ 2\leq j-i}} \pm (D(\lambda_i,\dots, \lambda_j),\varphi)*(D(\lambda_j,\dots, \lambda_i),\varphi).$$



\subsection{Calabi-Yau equivariant categories and TCFTs} 

Our main result states that the category of open equivariant TCFTs is quasi-isomorphic to the category of Calabi-Yau equivariant $A_\infty$-categories. Observe that for such a category $\gcc$, the elements of $\aut(\gcc)$ will be given, for each $\alpha\in G$, by the disc with one incoming and one outgoing boundary component. We will also get products $m_n$ from the generators of the categories defined in the previous sections.
\\\\
Let $\FF:\mathpzc{D}^+_{\Lambda,G}\to \comp_{\K}$ be a split symmetric monoidal functor. For each $O\in \mathbb{N}$ and D-brane labelling given by $\{s(i), t(i)\}$, with $0\leq i\leq O-1$, the following isomorphism holds:
\begin{equation} \label{isom}
\FF([O],s,t)\cong \bigotimes_{i=0}^{O-1}\FF(\{s(i), t(i)\}).
\end{equation}

Let the pair $\{s(i), t(i)\}$ correspond to the pair of D-branes $\{\lambda_i,\lambda_{i+1}\}$. We can define a category $\bbb$ with $\ob(\bbb):=\Lambda$ and $\homo_{\bbb}(\lambda_i,\lambda_{i+1}):=\FF(\{s(i), t(i)\})$. Composition of morphisms in $\bbb$ makes sense as $\FF$ is split. 
Observe that we are just associating each open boundary component to the space $\homo_{\bbb}(\lambda_i, \lambda_{i+1})$. 

\begin{lemma} \label{invinfinity}
A split symmetric monoidal functor $\FF:\mathpzc{D}^+_{\Lambda, G}\to \comp_{\K}$ is the same as a unital equivariant $A_\infty$-category $\bbb$ with set of objects $\Lambda$.
\end{lemma}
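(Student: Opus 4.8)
The plan is to establish a bijective correspondence by unwinding what the free generation result (Corollary~\ref{Mess}) tells us about split symmetric monoidal functors out of $\mathpzc{D}^+_{\Lambda, G}$. Since $\mathpzc{D}^+_{\Lambda, G}$ is freely generated as a DGSM category over $\ob(\mathpzc{D}_{\Lambda,G})$ by the untwisted discs $\overline{(D^+(\lambda_0,\dots,\lambda_{n-1}),\varphi)}$ and the twisted discs $\overline{(D^\mathfrak{g}(\lambda_i,\lambda_{i+1}), \varphi)}$, a split symmetric monoidal functor $\FF$ is determined by its values on these generators, subject only to preserving the three relations listed in that corollary. First I would use the splitting isomorphism \eqref{isom} to set up the underlying graded category $\bbb$ with $\ob(\bbb)=\Lambda$ and $\homo_{\bbb}(\lambda_i,\lambda_{i+1}):=\FF(\{s(i),t(i)\})$, as already indicated in the text preceding the statement.

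Next I would read off the $A_\infty$-structure from the images of the generators. The untwisted disc $\overline{(D^+(\lambda_0,\dots,\lambda_{n-1}),\varphi)}$ with $n$ marked points (one outgoing, the rest incoming) should be sent to the product
$$m_{n-1}:\homo_{\bbb}(\lambda_0,\lambda_1)\otimes\cdots\otimes\homo_{\bbb}(\lambda_{n-2},\lambda_{n-1})\to\homo_{\bbb}(\lambda_0,\lambda_{n-1}),$$
whose degree $n-3$ matches the dimension of the corresponding moduli cell, while the twisted discs $\overline{(D^\mathfrak{g}(\lambda_i,\lambda_{i+1}),\varphi)}$ furnish, for each $g\in G$, the action $\varphi_g$ on morphisms; the requirement that $\FF$ be split and symmetric monoidal forces the compatibility $\varphi_\alpha(m_n(\cdots))=m_n(\varphi_\alpha(\cdots))$ demanded in the definition of an equivariant $A_\infty$-category. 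I would then translate the three relations of Corollary~\ref{Mess} one by one: relation~\eqref{prodequiv} yields the $A_\infty$-associativity (Stasheff) relations together with the equivariance of the products; the $n=3$ relation supplies the two-sided unit $1_c$ and the first unitality axiom $b_2(f\otimes 1_c)=f$, $b_2(1_c\otimes g)=g$; and the $n\geq 4$ relation gives the second unitality axiom, $b_n(\cdots\otimes 1_{c_j}\otimes\cdots)=0$. Running this correspondence in reverse — starting from a unital equivariant $A_\infty$-category and defining $\FF$ on generators — shows the assignment is bijective.

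**The hard part will be** the bookkeeping of signs and the verification that the differential $d$ on $\mathpzc{D}^+_{\Lambda,G}$ (defined by splitting a disc at a pair of marked points) corresponds exactly to the $A_\infty$-relations among the $m_n$, i.e.\ that a DG functor respecting $d$ is precisely one whose associated operations satisfy the Stasheff identities with the correct signs. I would keep careful track of the degree shift (the product of $n-1$ inputs carries degree $n-3$, matching the cellular dimension) so that $\FF$ being a chain map is equivalent to the quadratic $A_\infty$-relations. The unital conditions then follow from the degenerate-disc relations, and one checks that the equivariance structure is transported faithfully in both directions, completing the proof that the two notions coincide.
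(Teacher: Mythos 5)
Your proposal matches the paper's proof in essence: both read the structure off the generators of Corollary~\ref{Mess} via the splitting isomorphism~(\ref{isom}) --- untwisted discs giving the products $m_{n-1}$, twisted discs giving the $G$-action, the $n=1$ and $n=2$ discs giving unit and identity, and the differential $d$ encoding the $A_\infty$-relations. One small correction: relation~\ref{prodequiv} of Corollary~\ref{Mess} yields only the equivariance $\varphi_\alpha(m_n(\cdots))=m_n(\varphi_\alpha(\cdots))$, not the Stasheff relations, which (as you do note in your final paragraph) come entirely from compatibility with the differential $d$.
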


\begin{proof}
The proof follows from the isomorphism (\ref{isom}) above. Let us observe that:
\begin{enumerate}
\item The homomorphism of groups $\varphi:G\to \aut(\bbb)$ is given by
discs $(D^\mathfrak{g}(\lambda_0,\lambda_1),\psi)$. 

\item the discs $(D^+(\lambda_0,\dots, \lambda_{n-1}),\psi)$ yield the products
$$m_{n-1}:\homo_{\bbb}(\lambda_0,\lambda_1)\otimes\dots\otimes\homo_{\bbb}(\lambda_{n-2},\lambda_{n-1})\to\homo_{\bbb}(\lambda_0,\lambda_{n-1});$$
\item the differential $d$ gives the $A_\infty$-relations between the $m_n$; 
\item for $n=2,\, (D^+(\lambda_0,\lambda_1),\psi)$ yields the identity $\homo_{\bbb}(\lambda_0,\lambda_1)\to\homo_{\bbb}(\lambda_0,\lambda_1)$; 
\item for $n=1,\, (D^+(\lambda), \psi)$ yields the unit $\K\to\homo_{\bbb}(\lambda,\lambda)$; 
\end{enumerate}
Relation \ref{prodequiv} in Corollary \ref{Mess} shows that the products $m_n$ preserve the group action. \qedhere
\end{proof}

\begin{lemma} \label{invcalabi}
A split symmetric monoidal functor $\FF:\mathpzc{D}^G_{\Lambda}\to \comp_{\K}$ is the same as a Calabi-Yau unital equivariant $A_\infty$-category $\bbb$ with set of objects $\Lambda$.
\end{lemma}

\begin{proof}
The proof follows the same arguments of Lemma \ref{invinfinity} but now we have two more generators (see Theorem \ref{theorem_Mess}): the discs with two incoming and two outgoing marked points, which yield the map
$$\homo_{\bbb}(\lambda_0,\lambda_1)\otimes\homo_{\bbb}(\lambda_1,\lambda_0)\to \K$$
and its inverse. The extra relations on $\mathpzc{D}^G_{\Lambda}$ correspond to the cyclic symmetry condition. Relation \ref{prodequiv} of Corollary \ref{Mess} yields the identity $\langle \phi_\alpha(f),\phi_\alpha(g) \rangle=\langle f, g\rangle$. 
\end{proof}

A \index{CYUE equivariant $A_\infty$-category} \textit{Calabi-Yau unital extended equivariant $A_\infty$-category} with objects in $\Lambda$ is a h-split symmetric monoidal functor $\FF:\mathpzc{D}^G_{\Lambda}\to\comp_{\K}$. By considering $\mathpzc{D}^+_{\Lambda, G}$ instead of $\mathpzc{D}^G_{\Lambda}$ we get the concept unital extended equivariant $A_\infty$-category. If we consider split functors instead of h-split functors, we obtain the concept of unital Calabi-Yau equivariant $A_\infty$-category and the concept of unital equivariant $A_\infty$-category respectively.

\begin{proposition}
The category of Calabi-Yau unital extended equivariant $A_\infty$-categories with set of objects $\Lambda$ is quasi-equivalent to the category of open equivariant TCFTs.
\end{proposition}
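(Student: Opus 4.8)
The plan is to read this quasi-equivalence off directly from the quasi-isomorphism of DGSM categories $\mathpzc{D}^G_{\Lambda}\cong\goo^G_\Lambda$ recorded in \propref{quasiisos}. By definition both categories in the statement consist of h-split symmetric monoidal functors into $\comp_{\K}$: a Calabi-Yau unital extended equivariant $A_\infty$-category is such a functor out of $\mathpzc{D}^G_{\Lambda}$, whereas an open equivariant TCFT is such a functor out of $\goo^G_\Lambda$. Since the two source categories are quasi-isomorphic, one expects the associated categories of h-split functors to be quasi-equivalent, the comparison being implemented by precomposition.

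First I would fix the quasi-isomorphism $\Phi\colon\mathpzc{D}^G_{\Lambda}\to\goo^G_\Lambda$ furnished by \propref{quasiisos}; recall that it is the identity on objects, a quasi-isomorphism on each morphism complex, and symmetric monoidal, being induced by the comparison $\CC^{\text{cell}}(X)\to\CC(X)$ together with the homotopy equivalences of orbi-spaces of the preceding propositions. Precomposition with $\Phi$ then yields a functor $\Phi^{*}$ carrying an open ETCFT $\FF\colon\goo^G_\Lambda\to\comp_{\K}$ to $\FF\circ\Phi\colon\mathpzc{D}^G_{\Lambda}\to\comp_{\K}$. Because $\Phi$ is strictly symmetric monoidal and sends quasi-isomorphisms to quasi-isomorphisms, the composite $\FF\circ\Phi$ is again h-split, so $\Phi^{*}$ indeed lands among the Calabi-Yau unital extended equivariant $A_\infty$-categories.

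The substance of the argument is to check that $\Phi^{*}$ is a quasi-equivalence. For the quasi-fully-faithful part I would note that morphisms in either functor category are natural transformations, whose complexes are computed as an end over the source category; since $\Phi$ is a quasi-isomorphism on every hom-complex and we work over a field $\K$ of characteristic zero (so that no flatness or convergence issues arise), precomposition induces a quasi-isomorphism on these complexes of natural transformations, the h-split restriction being compatible throughout. For essential surjectivity, given an h-split functor $\GG\colon\mathpzc{D}^G_{\Lambda}\to\comp_{\K}$ I must produce an open ETCFT $\FF$ with $\Phi^{*}\FF\simeq\GG$; here I would transport $\GG$ across $\Phi$ using the generators-and-relations presentation of $\mathpzc{D}^G_{\Lambda}$ from \thmref{theorem_Mess}, exactly as in the corresponding step of \cite{Costello07} and its unoriented counterpart in \cite{Ramses15}, with the extra equivariant data recorded by the twisted-disc generators.

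I expect essential surjectivity to be the main obstacle. Unlike quasi-full-faithfulness, which is a formal consequence of $\Phi$ being a quasi-isomorphism, lifting an h-split functor along $\Phi$ calls for a homotopy inverse, and the comparison $\CC^{\text{cell}}\to\CC$ need not admit a strict section. I would circumvent this by invoking the model structure on DG categories---equivalently, by working with the explicit cofibrant generators-and-relations model of $\mathpzc{D}^G_{\Lambda}$---so that the lift is built up to coherent homotopy; throughout, the h-split property is preserved because every structure map in sight is assembled from the gluing of surfaces, which $\Phi$ respects.
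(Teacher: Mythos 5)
Your proposal is correct and takes essentially the same route as the paper: since a Calabi-Yau unital extended equivariant $A_\infty$-category is by definition an h-split symmetric monoidal functor $\mathpzc{D}^G_{\Lambda}\to\comp_{\K}$ and an open ETCFT is such a functor out of $\goo^G_\Lambda$, the paper deduces the quasi-equivalence directly from the quasi-isomorphism of \propref{quasiisos} together with \lemref{invcalabi}. Your elaboration of precomposition, quasi-full-faithfulness and the homotopy-coherent lifting needed for essential surjectivity only fills in details that the paper's two-line proof leaves implicit.
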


\begin{proof}
Let us recall that an open equivariant TCFT is an h-split symmetric monoidal functor 
$$\KK:\goo^G_{\Lambda}\to \comp_{\K}.$$
The result follows from Lemma \ref{invcalabi} and the quasi-isomorphism between $\goo^G_\Lambda$ and $\mathpzc{D}^G_{\Lambda}$, in Proposition \ref{quasiisos}.
\end{proof}

Given categories $\mathpzc{C}$ and $\mathpzc{D}$, we define a \index{Quasi-equivalence} \textit{quasi-equivalence} as a pair of functors $\FF:\mathpzc{C}\to \mathpzc{D}$ and $\GG:\mathpzc{D} \to \mathpzc{C}$ such that the following quasi-isomorphisms hold: $\FF\circ \GG\simeq \mathbb{1}_{\mathpzc{D}}$ and $\GG\circ \FF\simeq \mathbb{1}_{\mathpzc{C}}$. 

\begin{proposition}[cf. Proposition 8.4 \cite{Ramses15}]\label{quasi-equivalence}
The following categories are quasi-equivalent:
\begin{enumerate}
\item The category of unital extended equivariant $A_\infty$-categories;
\item the category of unital equivariant $A_\infty$-categories and
\item the category of unital equivariant DG categories.
\end{enumerate}
\end{proposition}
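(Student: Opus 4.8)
The plan is to factor the chain of quasi-equivalences as (1) $\simeq$ (2) followed by (2) $\simeq$ (3), treating the passage from extended to non-extended structures and the passage from $A_\infty$ to DG structures as two independent rectifications, and at each stage to verify that every functor and every homotopy is compatible with the group action $\varphi\colon G\to\aut$.

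For (1) $\simeq$ (2) I would exploit Lemma \ref{invinfinity}, by which both categories are presented by symmetric monoidal functors $\mathpzc{D}^+_{\Lambda,G}\to\comp_{\K}$, the sole distinction being whether the decomposition (\ref{isom}) holds as a genuine isomorphism (split, giving (2)) or merely up to quasi-isomorphism (h-split, giving (1)). A split functor is in particular h-split, so the inclusion supplies one leg of the quasi-equivalence. For the other leg I would rectify an h-split functor $\FF$ to a split functor $\FF'$ by redefining its value on each object $\alpha=([O],s,t)$ to be the tensor product $\bigotimes_{i=0}^{O-1}\FF(\{s(i),t(i)\})$ and transporting the structure maps along the quasi-isomorphisms that are part of the h-split data. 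Because these quasi-isomorphisms are natural, the assignment $\FF\mapsto\FF'$ is functorial, and it is quasi-inverse to the inclusion: composing in one order returns an isomorphic split functor, while in the other order the rectification is quasi-isomorphic to the original h-split functor through the very maps used to build it.

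For (2) $\simeq$ (3) I would use that a DG category is exactly an $A_\infty$-category with $m_n=0$ for $n\geq 3$, which gives an evident inclusion of unital equivariant DG categories into unital equivariant $A_\infty$-categories. In the reverse direction I would invoke the standard rectification of $A_\infty$-categories---for instance the Yoneda embedding into the DG category of $A_\infty$-modules, or the cobar-bar resolution---which assigns to each unital equivariant $A_\infty$-category $\bbb$ a quasi-equivalent DG category $\bbb^{\mathrm{dg}}$. The action of $G$ on $\bbb$ is by $A_\infty$-automorphisms, and since the chosen rectification is functorial in $A_\infty$-functors it carries this action to $\bbb^{\mathrm{dg}}$, with the quasi-equivalence $\bbb\simeq\bbb^{\mathrm{dg}}$ equivariant by naturality.

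The hard part will be this last compatibility: the group acts through $A_\infty$-automorphisms, which need not be strict, so a priori the rectification yields only an action on $\bbb^{\mathrm{dg}}$ up to coherent homotopy rather than the honest DG action demanded by category (3). I would need to check that the homomorphism $G\to\aut(\bbb)$ rectifies in lockstep with $\bbb$ itself, so that the coherence data collapse to a genuine group action on $\bbb^{\mathrm{dg}}$. Since $G$ is discrete and acts by automorphisms---in particular by quasi-isomorphisms---the functoriality of the rectification should force this, but it is precisely this interaction between the strictification of $\bbb$ and the strictification of its $G$-action, rather than either rectification in isolation, that constitutes the genuinely equivariant content of the statement.
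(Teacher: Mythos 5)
The first thing to note is that the paper contains no proof of this proposition: it is stated with the citation ``cf.\ Proposition 8.4 \cite{Ramses15}'' and the document ends there, so the intended argument is the one transported from the cited work (itself following Costello). That argument is run at the level of the representing DGSM source categories rather than object-by-object: by \lemref{invinfinity} and its variants, each of the three categories is a category of split (resp.\ h-split) symmetric monoidal functors out of a DGSM category --- $\mathpzc{D}^+_{\Lambda,G}$ for the $A_\infty$ case, with the DG case obtained from a quotient in which the cells generating the higher products are collapsed --- and the quasi-equivalences are induced by quasi-isomorphisms between these sources together with the comparison of split against h-split functors. In particular, equivariance takes care of itself there: the $G$-action is not extra structure on a functor but is carried by the twisted-disc generators $\overline{(D^{\mathfrak{g}}(\lambda_i,\lambda_{i+1}),\varphi)}$ inside the source category (Corollary~\ref{Mess}), so any equivalence induced on functor categories is automatically compatible with the action. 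Your decomposition into $(1)\simeq(2)$ followed by $(2)\simeq(3)$ matches this structure, but you rectify individual objects instead, and that is where the gaps appear.

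Concretely, in your step $(1)\simeq(2)$, ``transporting the structure maps along the quasi-isomorphisms'' does not by itself produce a strict split functor: the h-split data gives objectwise quasi-isomorphisms $\FF(\alpha)\simeq\bigotimes_{i}\FF(\{s(i),t(i)\})$, but conjugating the morphism-level maps by homotopy inverses of these preserves composition and the monoidal structure only up to homotopy, so naturality alone does not make $\FF\mapsto\FF'$ a functor into split functors. The correction requires homotopy transfer, and the place where that is legitimate is precisely the presentation by generators and relations of $\mathpzc{D}^+_{\Lambda,G}$ in Corollary~\ref{Mess}: one defines $\FF'$ on the generating discs and verifies the (finitely many) relations, rather than attempting a simultaneous transport on all morphisms. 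Conversely, in your step $(2)\simeq(3)$ the difficulty you single out as the hard part is not present in this paper's setting: by definition $\varphi\colon G\to\aut(\bbb)$ lands in \emph{strict} automorphisms which are the identity on objects and strictly commute with every $m_n$, so a functorial rectification (Yoneda into $A_\infty$-modules, or bar--cobar) transports the action strictly, with no coherence data to collapse --- and in the source-category formulation of the cited proof the question never arises at all. So your outline is repairable and its skeleton agrees with the intended argument, but as written the first step has a genuine gap, and your analysis of the second step locates the equivariant content in a place where, given the paper's strictness conventions, there is none.
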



\bibliographystyle{amsalpha}
\bibliography{/Users/Ramses/Documents/Universitat/Paper_Equivariant/Bibliografia}

\end{document}